\definecolor{darkgreen}{rgb}{0,0.4,0}
\definecolor{BrickRed}{rgb}{0.65,0.08,0}
\newcommand{\stepset}{\mathcal{S}}
\newcommand{\walksym}{\omega}
\newcommand{\walk}[1]{\walksym_{#1}}
\newcommand{\neigh}{\Omega}
\newcommand{\neighray}[1]{\neigh \setminus #1}
\newcommand{\neighrayrho}{\neighray{(\rho,\infty)}}
\newcommand{\E}{\mathbb{E}}
\newcommand{\N}{\mathbb{N}}
\newcommand{\PR}{\mathbb{P}}
\newcommand{\R}{\mathbb{R}}
\newcommand{\V}{\mathbb{V}}
\newcommand{\Z}{\mathbb{Z}}
\newcommand{\Hc}{\mathcal{H}}
\newcommand{\Nc}{\mathcal{N}}
\newcommand{\Rc}{\mathcal{R}}
\newcommand{\LandauO}{\mathcal{O}}
\newcommand{\Landauo}{o}
\newcommand{\diamondend}
{ \hfill ${\lozenge} $}
\theoremstyle{plain}
\newtheorem{theorem}{Theorem}[section]
\newtheorem{lemma}[theorem]{Lemma}
\newtheorem{proposition}[theorem]{Proposition}
\newtheorem{example}[theorem]{Example}
\theoremstyle{definition}
\newenvironment{definition}[1][]{\refstepcounter{theorem} 
	\medskip \noindent \textbf{\textit{Definition~\thetheorem}}%
	\ifthenelse{\equal{#1}{}}{.}{~(#1).}%
}{ \diamondend \medskip}
\theoremstyle{remark}
\newtheorem{remark}[theorem]{Remark}
\newenvironment{hypo}[1][]{\medskip \noindent %
\textbf{\textit{Hypothesis #1:}} }
{ \diamondend \medskip }
\newcommand{\roleW}{r\^ole}
\newcommand{\role}{\roleW~}
\begin{document}


\author{{Michael Wallner}$^1$\\[1cm]
	 {$^1$} Institute of Discrete Mathematics and Geometry, TU Wien, Austria \\
	LaBRI, 
	Universit\'e de Bordeaux,
	France \\
	\url{http://dmg.tuwien.ac.at/mwallner/}}

\title{A half-normal distribution scheme for\\ generating functions 
}
\date{}

\maketitle
\thispagestyle{firststyle}

{\it \small
This work supplants the extended abstract ``A half-normal distribution scheme for generating functions and the unexpected behavior of Motzkin paths'' which appeared in the Proceedings of the 27th International Conference on Probabilistic, Combinatorial and Asymptotic Methods for the Analysis of Algorithms (AofA 2016) Krakow Conference.}

\begin{abstract}
We present a general theorem on the structure of bivariate generating functions which gives sufficient conditions such that the limiting probability distribution is a half-normal distribution. 
If $X$ is a normally distributed random variable with zero mean, then $|X|$ obeys a half-normal distribution.
In the second part, we apply our result to prove three natural appearances in the domain of lattice paths: the number of returns to zero, the height, and the sign changes are under zero drift distributed according to a half-normal distribution.
This extends known results to a general step set.
Finally, our result also gives a new proof of Banach's matchbox problem.
\end{abstract}


\section{Introduction}
\label{sec:intro}

Generating functions $\sum f_{n} z^n$ are a powerful tool in combinatorics and probability theory. One of the main reasons of their success is the symbolic method~\cite{flaj09}, a general correspondence between combinatorial constructions and functional equations. It provides a direct translation of the structural description of a class into an equation on generating functions without the necessity of first deriving recurrence relations on the level of counting sequences. 
A natural generalization are bivariate generating functions $\sum_{n,k} f_{nk} z^n u^k$.
We say the variable $u$ ``marks'' a parameter, like e.g.~the height, if $\sum_{k} f_{nk} = f_n$ and $f_{nk} \geq 0$. 
In general,~$n$ is the length or size, and~$k$ is the value of a ``marked'' parameter. 

Analytic schemes for generating functions, or short only schemes, are general methods to deduce properties of the counting sequences $f_{nk}$ by solely analyzing the properties of  their bivariate generating functions. 
Like a black box, after checking algebraic, analytic, and geometric properties, the governing limit distribution follows directly.
Many different distributions are covered in the literature and most of them were celebrated results with many applications.
Let us mention some of them next.

The most ``normal'' case is a Gaussian limit distribution. Many schemes describe necessary conditions for its existence: Hwang's quasi-powers theorem~\cite{hwan98}, the supercritical composition scheme~\cite[Proposition~IX.6]{flaj09}, the algebraic singularity scheme \cite[Theorem~IX.12]{flaj09}, a square-root singularity scheme~\cite{DrSo97}, an implicit function scheme for algebraic singularities \cite[Theorem~2.23]{drmo09}, and the limit law version of the Drmota--Lalley--Woods Theorem~\cite[Theorem~8]{badr15}. 
But such schemes also exist for other distributions such as for example the Airy distribution~\cite{bfss01}, or the Rayleigh distribution~\cite{DrSo97}. 
In general, it was shown in~\cite{bbpb12} and \cite[Theorem~10]{badr15} that even in simple examples ``any limit law'' is possible (the limit law can be arbitrarily close to any c\`adl\`ag multi-valued curve in $[0,1]^2$). 
Let us also mention the recent study~\cite{hwang2018asymptotic}, which compares hundreds of combinatorial sequences and their limit distributions to find common features using the method of moments and also treats the half-normal distribution.

The main result of this paper is the first scheme stating sufficient conditions for a half-normal distribution given in Theorem~\ref{theo:theo4}.
This distribution is generated by the absolute value $|X|$ of a normally distributed random variable $X$ with mean zero. 
We compare the key properties of some relevant distributions in Table~\ref{tab:compprobdis}. 
Our result falls into the class of generating functions analyzed by Drmota and Soria in~\cite{DrSo95,DrSo97}, as a similar structure of a square-root type is present.

\linespread{1.07}
\begin{table}[h!]
	\centering
	\begin{tabular}{cccc}
		\toprule
		&
		\underline{\bf Normal}
		&
		\underline{\bf Half-normal}
		&
		\underline{\bf Rayleigh}
		\\
		&
		$
			\Nc(\mu,\sigma^2)
		$
		&
		$
			\Hc(\sigma)
		$
		&
		$
			\Rc(\sigma)
		$
		\\
		\midrule  &
		{\includegraphics[width=0.17\textwidth]{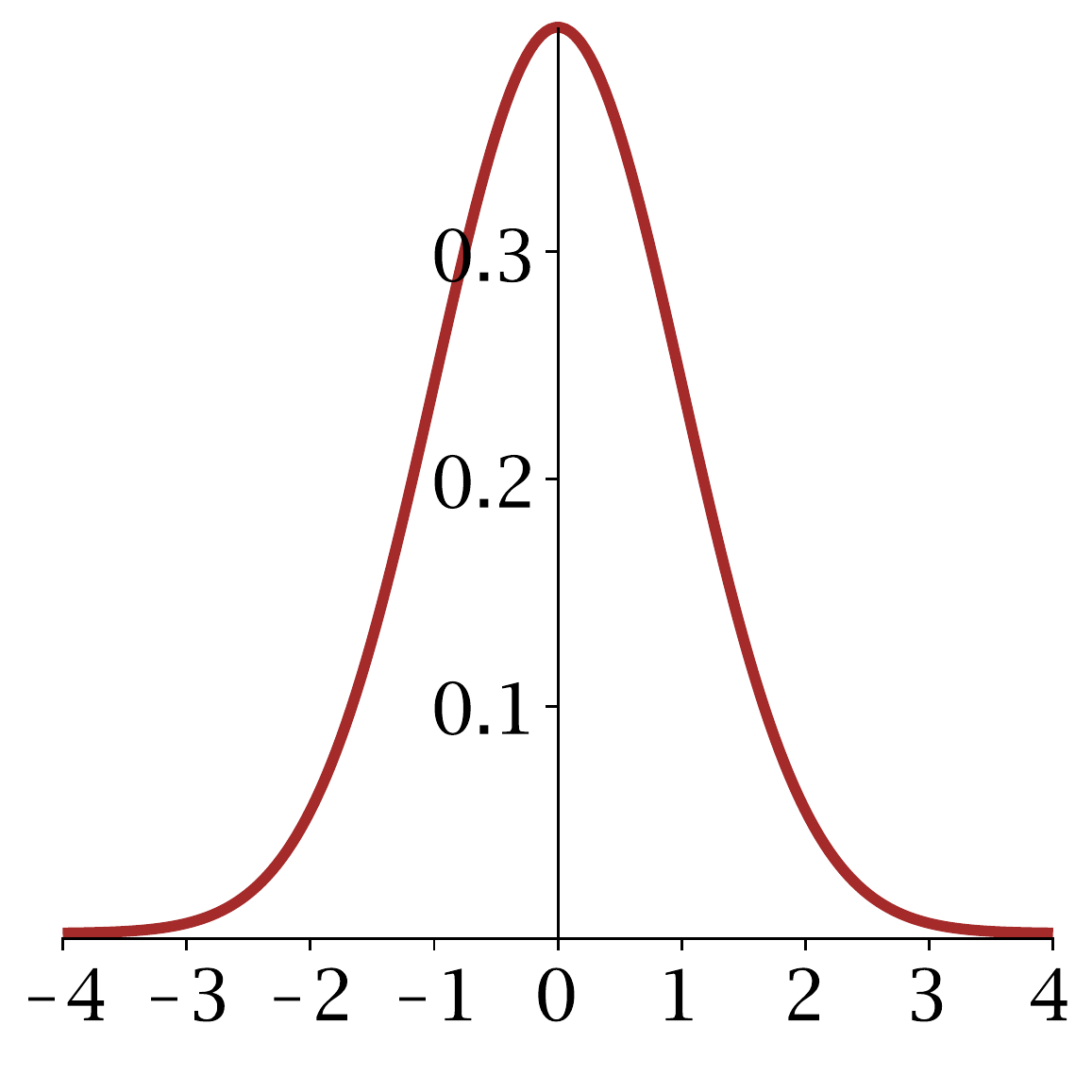}}
		&
		{\includegraphics[width=0.17\textwidth]{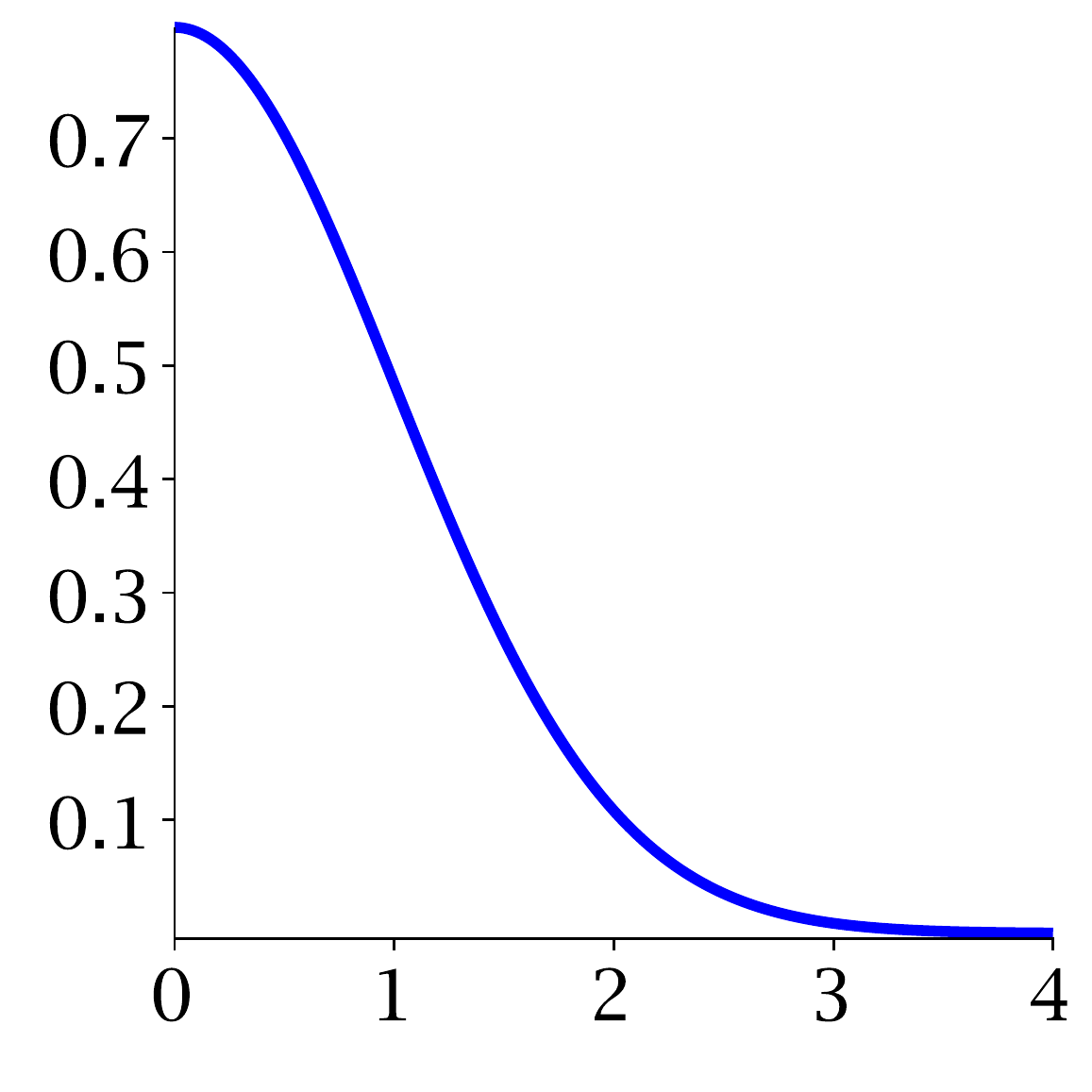}}
		&
		{\includegraphics[width=0.17\textwidth]{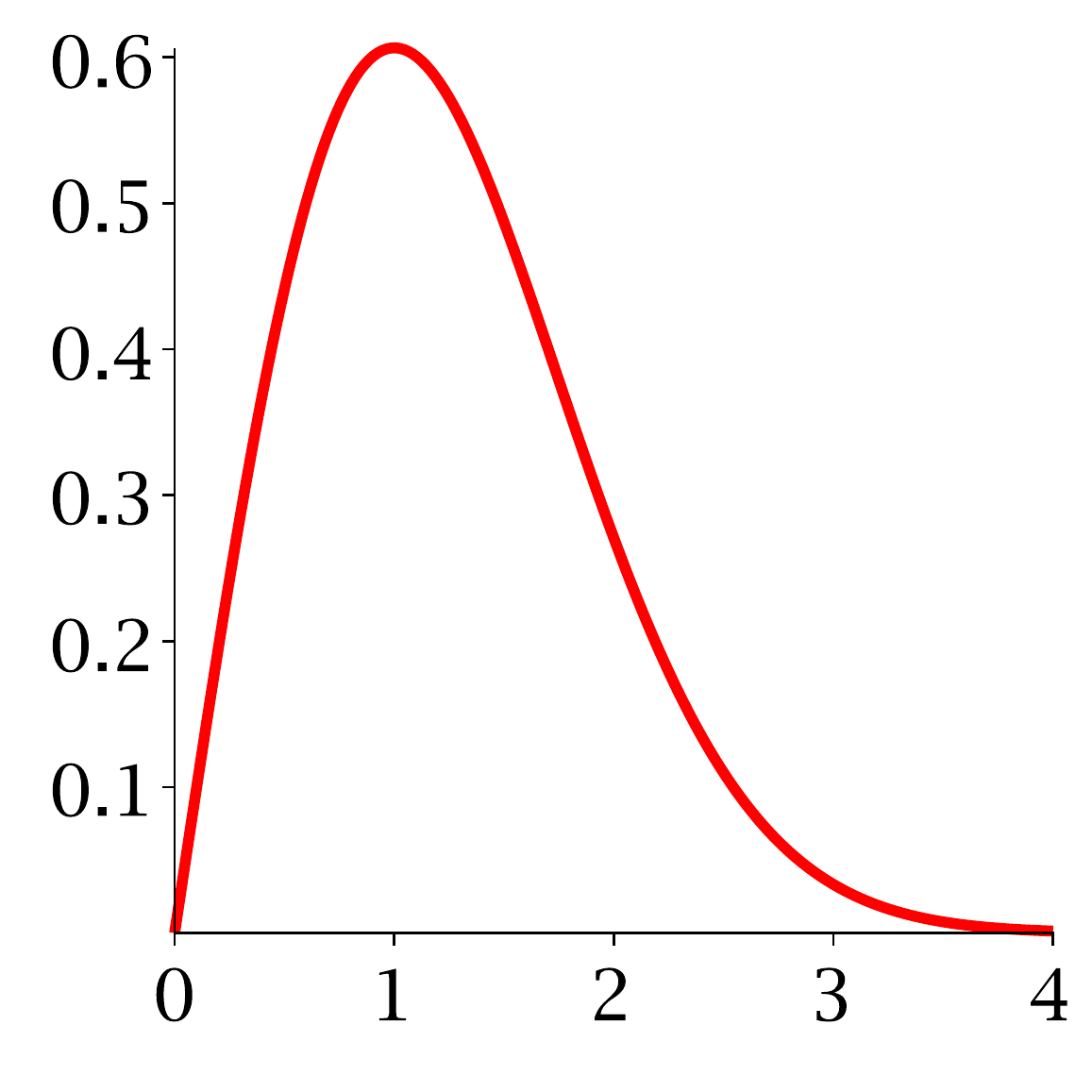}}
		\\[-1mm]
		Support
		&
		$
			x \in \R
		$
		&
		$
			x \in \R_{\geq 0}
		$
		&
		$
			x \in \R_{\geq 0}
		$
		\\
		PDF
		&
		$
			\frac{1}{\sqrt{2 \pi \sigma^2}} \exp\left(-\frac{(x-\mu)^2}{2\sigma^2}\right)
		$
		&
		$
			\sqrt{\frac{2}{\pi \sigma^2}} \exp\left(-\frac{x^2}{2\sigma^2}\right)
		$
		&
		$
			\frac{x}{\sigma^2} \exp\left(-\frac{x^2}{2\sigma^2}\right)
		$
		\\
		Mean
		&
		$
			\mu
		$
		&
		$
			\sigma\sqrt{\frac{2}{\pi}}
		$%
		&
		$
			\sigma\sqrt{\frac{\pi}{2}}
		$%
		\\
		Variance
		&
		$
			\sigma^2
		$
		&
		$
			\sigma^2 \left(1 - \frac{2}{\pi}\right)
		$
		&
		$
			\sigma^2 \left(2 - \frac{\pi}{2} \right)
		$
		\\
		\bottomrule
	\end{tabular}
	\caption{A comparison of the normal, half-normal, and Rayleigh distribution. 
	We will encounter all of them in the context of lattice paths.
	}
	\label{tab:compprobdis}
\end{table}
\linespread{1.0}

\vspace{-0.2\baselineskip}

Despite its rare emergence in existing literature we also present four natural appearances of the half-normal distribution in lattice path theory: the number of returns to zero, the number of sign changes, the height of unconstrained walks, and the final altitude in the reflection model~\cite{bawa15c} (which encodes Banach's matchbox problem).
Our Theorem~\ref{theo:theo4} yields these results in a unified manner and pinpoints their common nature: a special square-root singularity, which is in all theses cases due to a coalescence of singularities.
These results extend previous ones on random walks and other special steps~\cite[Chapter III]{fell68} to arbitrary aperiodic lattice paths and show that the same phenomena prevail.
These include a phase transition in the growth when the drift changes from non-zero to zero and a change in the nature of the law; see Table~\ref{tab:compretsign}. 
Notably, the expected value for $\Theta(n)$ trials grows like $\Theta(\sqrt{n})$ and not, as might be expected, linearly.

The common feature of all examples is that after the combinatorial problem has been translated into generating functions, Theorem~\ref{theo:theo4} can be used as a black box to prove the appearance of a half-normal distribution.
The necessary generating function relations for the first three cases are summarized in Table~\ref{tab:BGFrelations}.
They were discussed in the context of Motzkin walks in the extended abstract \cite{wall16} (see Table~\ref{tab:compretsign}).

\begin{table}[ht!]
	\begin{center}
\linespread{1.5}
	\begin{tabular}{c@{\hspace*{10mm}}ccc}
		\toprule
		Marked Parameter      & BGF & Equation \\
		\midrule  
			Returns to zero & $\frac{W(z)}{u + (1-u) B(z)}$ & \eqref{eq:returnsBGF} \\
			Height & $\frac{W(z)M(z,u)}{M(z)} 
			$ & \eqref{eq:Fzuheight}\\
			Sign changes (Motzkin) &  $B(z,u) \frac{W(z)}{B(z)} + \frac{B(z,u)-C(z)}{2} \left(\frac{W(z)}{B(z)} - 1 \right) (u-1)$ & \eqref{eq:Fzusignchanges}
			\\
		\bottomrule
  \end{tabular}
\end{center}
\linespread{1.0}
\caption{
Relations for the bivariate generating functions (BGF) $W(z,u) = \sum_{n,k} w_{nk}z^k u^k$ of walks with marked parameter. The functions $W(z), B(z),M(z)$ are the GFs of walks, bridges, meanders, respectively; see Section~\ref{sec:typesofpaths}. $M(z,u)$ is the GF of meanders of length $n$ with marked final altitude, $B(z,u)$ is the GF of bridges of length $n$ with marked sign changes, $C(z) = \frac{1}{1-p_0 z}$. 
}
\label{tab:BGFrelations}
\end{table}

{\bf Plan of this article.} 
In Section~\ref{sec:halfnormal}, we present our main contribution: a scheme for bivariate generating functions leading to a half-normal distribution. 
In Section~\ref{sec:motzprop}, we apply our result to lattice paths. 
First, we give a brief introduction to the theory of analytic lattice path counting in Section~\ref{sec:typesofpaths}.
Then we apply our result to the number of returns to zero in Section~\ref{sec:returns}, the height in Section~\ref{sec:height}, the number of sign changes in Section~\ref{sec:sign} (where sign changes are treated only in the case of Motzkin walks), and to Banach's matchbox problem in Section~\ref{sec:Banach}. In the case of a zero drift a half-normal distribution appears in all cases.

\section{The half-normal theorem}
\label{sec:halfnormal}

Let $f(z) = \sum_{n \geq 0} f_n z^n$ with $f_n>0$ be a generating function and $f(z,u) = \sum_{n,k \geq 0} f_{nk} z^n u^k$ with $f_{nk}\geq0$ be the corresponding bivariate generating function where a parameter has been marked such that $f(z,1) = f(z)$. 
For fixed $n \in \N$ the numbers $f_{nk}$ implicitly define a (discrete) probability distribution. In particular, we define a sequence of random variables $X_n, n \geq 1$, by
\begin{align*}
	\PR(X_n = k) &:= \frac{f_{nk}}{f_n} = \frac{[z^n u^k] f(z,u)}{[z^n] f(z,1)}.
\end{align*}
Our goal is to identify the limit distribution of these random variables.
This is achieved by a careful analysis of algebraic and analytic properties of $f(z,u)$. 

The half-normal theorem requires certain technical conditions to be satisfied which are summarized in the following Hypothesis~[H'].
Note that if one additionally assumes $h(\rho,1) >0$ and $\rho=\rho(u)$ being a (possibly non-constant) function of $u$, as well as $1/f(z,u)$ can be analytically continued, then it is equivalent to~\cite[{Hypothesis~{[H]}}]{DrSo97}. Nevertheless, we fully state it here for the convenience of the reader.

\begin{hypo}[{[H']}]
	Let $f(z,u) = \sum_{n,k} f_{nk} z^n u^k$ be a power series in two variables with non-negative coefficients $f_{nk} \geq 0$ such that $f(z,1)$ has a radius of convergence $\rho > 0$. 
	
	We suppose that $1/f(z,u)$ has the local representation
	\begin{align}
		\label{eq:decompFinv}
		\frac{1}{f(z,u)} &= g(z,u) + h(z,u) \sqrt{1 - \frac{z}{\rho}},
	\end{align}
	for $|u-1| < \varepsilon$ and $|z - \rho| < \varepsilon$, $\arg(z-\rho) \neq 0$, where $\varepsilon > 0$ is some fixed real number, and $g(z,u)$ and $h(z,u)$ are analytic functions.
	Furthermore, we have $g(\rho,1)=0$.
	
	In addition, $z=\rho$ is the only singularity on the circle of convergence $|z| = \rho$, and 
	$f(z,u)$ can be analytically continued to a region $|z| < \rho + \delta, |u|<1+\delta, |u-1| > \frac{\varepsilon}{2}$, and $\arg(z-\rho) \neq 0$ for some $\delta > 0$. 
\end{hypo}

\begin{theorem}[Half-normal limit theorem]
	\label{theo:theo4}
	Let $f(z,u)$ be a bivariate generating function satisfying [H']. If $g_z(\rho,1) \neq 0$, $h_u(\rho,1) \neq 0$, and $h(\rho,1) = g_u(\rho,1) = g_{uu}(\rho,1) = 0$, then the sequence of random variables $X_n$ defined by
	$
		\PR(X_n = k) = \frac{[z^n u^k] f(z,u)}{[z^n] f(z,1)},
	$
	has a half-normal limiting distribution, i.e.,
	\begin{align*}
		\frac{X_n}{\sqrt{n}} \stackrel{\text{d}}{\to} \Hc(\sigma),
	\end{align*}
	where $\sigma = \sqrt{2}\frac{|h_u(\rho,1)|}{\rho |g_z(\rho,1)|}$, and $\Hc(\sigma)$ has density $\frac{\sqrt{2}}{\sqrt{\pi \sigma^2 }} \exp\left( - \frac{z^2}{2 \sigma^2} \right)$ for $z \geq 0$. Expected value and variance are given by
	\begin{align*}
		&& 
		\E[X_n] &= \sigma \sqrt{\frac{2}{\pi}} \sqrt{n} + \LandauO(1) & \text{and} &&
		\V[X_n] &= \sigma^2 \left( 1 - \frac{2}{\pi} \right) n + \LandauO(\sqrt{n}). 
		&&
	\end{align*}
	
	Moreover, we have the local law
	\begin{align*}
		\PR(X_n = k) &= \frac{1}{\sigma} \sqrt{\frac{2}{\pi n}} \exp\left( -\frac{k^2/n}{2\sigma^2 } \right) + \LandauO\left(kn^{-3/2}\right) + \LandauO\left( n^{-1} \right),
	\end{align*}
	uniformly for all $0 \leq k \leq k_0 \sqrt{n \log n}$ with $k_0 < \sigma$.
\end{theorem}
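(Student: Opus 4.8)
The plan is to follow the classical singularity-analysis route used by Drmota and Soria, computing the characteristic function of $X_n/\sqrt n$ via Cauchy's formula and identifying the limit as the characteristic function of $\Hc(\sigma)$. First I would substitute $u = e^{it/\sqrt n}$ and write
\begin{align*}
	\E\left[e^{it X_n/\sqrt n}\right] &= \frac{[z^n] c(z, e^{it/\sqrt n})}{[z^n] c(z,1)}.
\end{align*}
Hypothesis [H'] gives the local decomposition $1/c(z,u) = g(z,u) + h(z,u)\sqrt{1-z/\rho}$; the assumptions $g(\rho,1)=0$, $h(\rho,1)=0$ mean that at $u=1$ the function $c(z,1)$ has a simple pole-like behaviour governed by $g_z(\rho,1)\neq 0$, i.e.\ $c(z,1) \sim \frac{1}{\rho g_z(\rho,1)}\,(1-z/\rho)^{-1}$ up to the square-root correction. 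The key point is to Taylor-expand $g$ and $h$ around $(\rho,1)$ in the variable $u-1$, keeping in mind that $u-1 \approx it/\sqrt n$ is small of order $n^{-1/2}$: using $h(\rho,1)=0$ we get $h(\rho,u) \approx h_u(\rho,1)(u-1)$, and using $g_u(\rho,1)=g_{uu}(\rho,1)=0$ we get $g(\rho,u) \approx g_z(\rho,1)(z-\rho) + \LandauO((u-1)^3)$, so the cubic-and-higher terms in $u-1$ are negligible after the $\sqrt n$ rescaling.

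The second step is to locate the dominant singularity $\rho(u)$ of $c(z,u)$ for $u$ near $1$: it is the solution of $1/c(z,u)=0$, i.e.\ of $g(z,u) = -h(z,u)\sqrt{1-z/\rho}$. Squaring and using the expansions above, one finds $\rho(u) = \rho\bigl(1 + \LandauO((u-1)^2)\bigr)$ — the linear term in $u-1$ vanishes precisely because $g_u(\rho,1)=0$ and $h(\rho,1)=0$, and this is what forces the mean to be $\LandauO(\sqrt n)$ rather than $\Theta(n)$. More precisely I would derive $\rho(u)/\rho = 1 + \kappa (u-1)^2 + \cdots$ with $\kappa$ expressible through $h_u(\rho,1)$ and $g_z(\rho,1)$, so that $\bigl(\rho/\rho(u)\bigr)^n = \exp\bigl(-\kappa t^2 + o(1)\bigr)$ when $u = e^{it/\sqrt n}$. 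Applying singularity analysis (transfer theorems, justified by the analytic-continuation clause of [H']) to the local expansion of $c(z,u)$ near $z=\rho(u)$, and dividing by the corresponding asymptotics of $[z^n]c(z,1)$, the polar part contributes the factor $(\rho/\rho(u))^n \to e^{-\kappa t^2}$ and the square-root part contributes a subdominant correction of relative order $n^{-1/2}$. Matching $e^{-\kappa t^2}$ against the known characteristic function of the half-normal law (which is $e^{-\sigma^2 t^2/2}\bigl(1 + i\,\mathrm{erfi}(\sigma t/\sqrt2)\bigr)$, equivalently obtained here because only the even part survives the rescaling while the odd part comes from the $h$-term) pins down $\sigma = \sqrt2\, h_u(\rho,1)/(\rho g_z(\rho,1))$; the mean and variance then follow from Table~\ref{tab:compprobdis} or by differentiating the characteristic function.

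For the local law I would instead apply Cauchy's formula directly, $\PR[X_n=k] = \frac{1}{2\pi i}\oint [z^n]c(z,u)\,u^{-k-1}\,du$ over $|u|=1$, or equivalently invert the characteristic-function estimate, and carry out a saddle-point/Laplace analysis: the saddle is at $u \approx \exp(-\,\text{const}\cdot k/n)$, the width is $\Theta(1/\sqrt n)$, and the error terms $\LandauO(kn^{-3/2})$ and $\LandauO(n^{-1})$ come respectively from the quadratic correction to $\rho(u)$ at the shifted saddle and from the square-root term $h(z,u)\sqrt{1-z/\rho}$ in [H']. The main obstacle, as in the original Drmota–Soria paper, is making the two expansions uniform: one must control the local decomposition of $c(z,u)$ simultaneously in $z$ near $\rho(u)$ and in $u$ on the whole unit circle (splitting into $|u-1|<\varepsilon/2$, where the local representation \eqref{eq:decompFinv} is used, and $|u-1|\geq\varepsilon/2$, where the analytic-continuation hypothesis gives an exponentially smaller contribution), and to verify that the cubic remainder terms in the Taylor expansions of $g$ and $h$ in $u-1$ genuinely vanish in the limit after the $n^{-1/2}$ scaling — this is exactly where the hypotheses $h(\rho,1)=g_u(\rho,1)=g_{uu}(\rho,1)=0$ are used and must be tracked with care.
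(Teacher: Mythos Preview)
Your overall framework is right --- compute $\E[e^{itX_n/\sqrt{n}}]$ via Cauchy's formula, Taylor-expand $g$ and $h$ around $(\rho,1)$ using the hypotheses, and exploit the analytic-continuation clause of [H'] to bound the part of the contour away from $u=1$. But the central mechanism you describe in the second paragraph is the wrong one, and as written it would not produce a half-normal limit.

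You track a moving pole $\rho(u)$ (the zero of $1/c$) and claim the dominant contribution is $(\rho/\rho(u))^n \to e^{-\kappa t^2}$, with the square-root part ``a subdominant correction of relative order $n^{-1/2}$''. That is the quasi-powers picture, and it yields a \emph{Gaussian}: the characteristic function of $\Hc(\sigma)$ is $e^{-\sigma^2 t^2/2}\bigl(1+i\,\operatorname{erfi}(\sigma t/\sqrt2)\bigr)$, and its imaginary odd part cannot emerge from a term you have just declared negligible. You notice the tension yourself (``the odd part comes from the $h$-term''), but the argument is internally inconsistent. In fact, under the substitution $z=\rho(1+s/n)$, $u=e^{it/\sqrt n}$, the two pieces of $1/c$ are of the \emph{same} order $1/n$:
\[
g(z,u)\ \sim\ \rho\, g_z(\rho,1)\,\frac{s}{n},
\qquad
h(z,u)\sqrt{1-z/\rho}\ \sim\ h_u(\rho,1)\,\frac{it}{\sqrt n}\cdot\frac{\sqrt{-s}}{\sqrt n},
\]
so neither dominates, and any zero of $1/c(z,u)$ sits at distance $O(1/n)$ from the branch point $\rho$ --- pole and branch point cannot be separated in the coefficient asymptotics. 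The paper therefore does not chase a moving singularity at all: it runs the Cauchy integral over a fixed Hankel contour around $\rho$, and after rescaling the integral becomes
\[
\frac{1}{2\pi i}\int_{\gamma}\frac{e^{-s}}{\,s+i\alpha t\sqrt{-s}\,}\,ds
\;=\;\varphi_{\Hc}\bigl(\sqrt2\,\alpha t\bigr),
\qquad \alpha=\frac{h_u(\rho,1)}{\rho\, g_z(\rho,1)},
\]
an identity established separately as Lemma~\ref{lem:charHalfnormal}. This Hankel-integral identity is the missing ingredient in your sketch; the half-normal comes from the \emph{interaction} of the $s$ and $\sqrt{-s}$ terms in the denominator, not from a polar factor times a lower-order correction.

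For the local law the paper likewise avoids a saddle in $u$: it locates the pole $u_0(z)$ of the map $u\mapsto c(z,u)$, picks up its residue (of order $\sqrt{n/(-s)}$), and then integrates over the Hankel contour in $z$ using the first identity of Lemma~\ref{lem:charHalfnormal}. Your saddle-point outline might be salvageable, but the residue route is shorter and is what actually delivers the stated error terms.
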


In the proof of Theorem~\ref{theo:theo4} we will need the following representation of the characteristic function of the half-normal distribution; see~\cite[Equation~($15$)]{tbha14}. 
The following lemma is analogous to \cite[Lemma~6%
]{DrSo97} where an integral representation of the characteristic function of the Rayleigh distribution is given. We omit the proof as it follows the same lines. 

\begin{lemma}
	\label{lem:charHalfnormal}
	Let $\varepsilon>0$ and let $\gamma$ be the Hankel contour starting from $-\varepsilon i + \infty$, passing around $0$ and tending to $\varepsilon i +\infty$. Then
	\begin{align*}
		\frac{1}{2 \pi i} \int_{\gamma} \frac{e^{-s}}{s+ix\sqrt{-s}} \, ds &= \varphi_{\Hc}\left(\sqrt{2}x\right), 
	\end{align*}	
	where $\varphi_{\Hc}(t) = \sqrt{\frac{2}{\pi}} \int_{0}^{\infty} e^{its} e^{-s^2/2} \, ds$ is the characteristic function of the half-normal distribution with $\sigma=1$.
\end{lemma}

\begin{proof}[Proof of Theorem~\ref{theo:theo4}]
	The proof follows the same steps as the one of~\cite[{Theorem 1}]{DrSo97}. 
	Therefore, we restrict ourselves to only highlight the main differences. 

	First, we derive asymptotic expansions for mean and variance using singularity analysis, which transfers a singular expansion of the form $(1-z/\rho)^{\alpha}$ in the expansion of $f(z)$ close to a singularity $\rho$ into the corresponding asymptotic term $\rho^{-n} n^{-\alpha-1}/\Gamma(-\alpha)$ of the coefficients $[z^n]f(z)$ for $n \to \infty$; see~\cite{FlajoletOdlyzko1990SingularityAnalysis,flaj09}.
	Due to $g(\rho,1) = h(\rho,1) = 0$, and $g_z(\rho,1) \neq 0$ we get from~\eqref{eq:decompFinv} that
	\begin{align}
		[z^n] f(z,1) 
		             &= -\frac{\rho^{-n}}{\rho g_z(\rho,1)} \left( 1 + \LandauO(n^{-1/2}) \right). \label{eq:theo4cAsym}
	\end{align}
	Analogously, because of $h_u(\rho,1) \neq 0$, and $h(\rho,1) = g_u(\rho,1) = g_{uu}(\rho,1) = 0$ we get
	\begin{align*}
		[z^n] f_u(z,1) &= -\frac{2 h_u(\rho,1) \rho^{-n}}{(\rho g_z(\rho,1))^2} \sqrt{\frac{n}{\pi}} \left( 1 + \LandauO(n^{-1/2}) \right), \\ {} 
		[z^n] f_{uu}(z,1) &= - \frac{2 h_u(\rho,1)^2}{(\rho g_z(\rho,1))^3} \rho^{-n} n \left(1 + \LandauO(n^{-1/2})\right).
	\end{align*}
	Hence,
	\begin{align*}
		\E[X_n] &= \frac{[z^n] f_u(z,1)}{[z^n]f(z,1)} = \frac{2 h_u(\rho,1)}{\rho g_z(\rho,1)} \sqrt{\frac{n}{\pi}} \left( 1 + \LandauO(n^{-1/2}) \right),\\
		\V[X_n] &= \frac{[z^n] f_{uu}(z,1)}{[z^n]f(z,1)} + \E[X_n] - \E[X_n]^2 = 2 \left( \frac{h_u(\rho,1)}{\rho g_z(\rho,1)} \right)^2 \left(1 - \frac{2}{\pi}\right) n + \LandauO(n^{1/2}).
	\end{align*}
	These results strongly suggest that the underlying limit distribution is a half-normal one. 
	We continue by deriving the asymptotic form of the characteristic function of $X_n/\sqrt{n}$. 
	
	Note that the same contour of integration as in~\cite{DrSo97} sketched in Figure~\ref{fig:hankel} can be used. 
	Therefore, we need the following expansions coming from the substitutions $z = \rho\left(1 + \frac{s}{n} \right)$ and $u = e^{it/\sqrt{n}} = 1 + \frac{it}{\sqrt{n}} + \LandauO(n^{-1})$:
	\begin{align}
		\label{eq:ghexpanded}
		\begin{split}
		g\left(\rho\left(1 + \frac{s}{n} \right), e^{it/\sqrt{n}}\right) &= g_z(\rho,1) \rho\frac{s}{n} + \LandauO\left(\frac{|s|}{n^{3/2}}\right), \\
		h\left(\rho\left(1 + \frac{s}{n} \right), e^{it/\sqrt{n}}\right) &= h_u(\rho,1) \frac{it}{\sqrt{n}} + \LandauO\left(\frac{|s|}{n}\right),
		\end{split}
	\end{align}
	as $g_u(\rho,1)=0$ and $h(\rho,1)=0$. We want to emphasize that this behavior is different from the one in \cite[{Theorem 1}]{DrSo97}, wherein $g_u(\rho,1)=1$ and $h(\rho,1)>0$ is required.  
	Thus, we get for the Cauchy integral along the contour $\Gamma_1$ (see Figure~\ref{fig:hankel})
	\begin{align}
		\frac{1}{2 \pi i} \int_{\Gamma_1} f(z,u) \frac{dz}{z^{n+1}} 
			&= \frac{\rho^{-n}}{2 \pi i} \int_{\gamma'} \frac{e^{-s}\left(1 + \LandauO\left(\frac{|s|}{n}\right) \right)}{g_z(\rho,1) \rho\frac{s}{n} + h_u(\rho,1) it\frac{\sqrt{-s}}{n} + \LandauO\left(\frac{|s|}{n^{3/2}}\right)} \, \frac{ds}{n} \notag \\
			&= \frac{\rho^{-n}}{ \rho g_z(\rho,1)}\frac{1}{2 \pi i} \int_{\gamma'} \frac{e^{-s}}{s + \sqrt{-s} i \frac{h_u(\rho,1) t}{ \rho g_z(\rho,1)} } \, ds + \LandauO\left(\frac{\rho^{-n}}{n^{1/2}}\right). \label{eq:charHalfnormala}
	\end{align}
	
	The other computations are again analogous to~\cite{DrSo97}. 
	By~\eqref{eq:charHalfnormala} and Lemma~\ref{lem:charHalfnormal} we get 
	\begin{align}
		\label{eq:theo4Gamma1res}
		\frac{1}{2 \pi i} \int_{\Gamma_1} f(z,u) \frac{dz}{z^{n+1}} 
			&= \frac{\rho^{-n}}{ \rho g_z(\rho,1)} \varphi_{\Hc} \left( \frac{\sqrt{2}h_u(\rho,1)}{\rho g_z(\rho,1)} t \right) + \LandauO\left(\frac{\rho^{-n}}{n^{1/2}}\right). 
	\end{align}

	\begin{figure}
		\begin{center}	
			\includegraphics[width=0.25\textwidth]{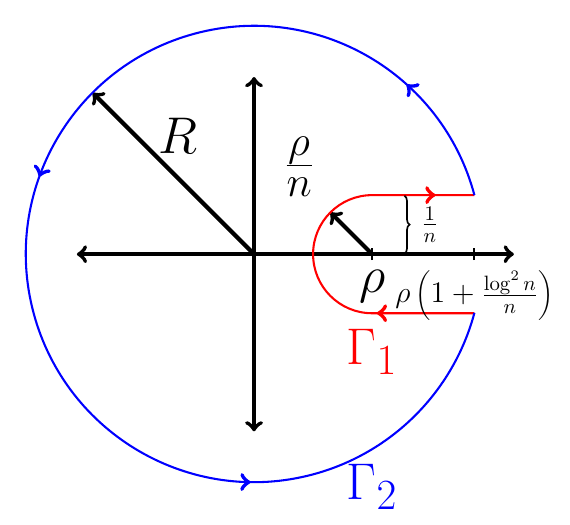} \qquad \qquad \qquad
			\includegraphics[width=0.25\textwidth]{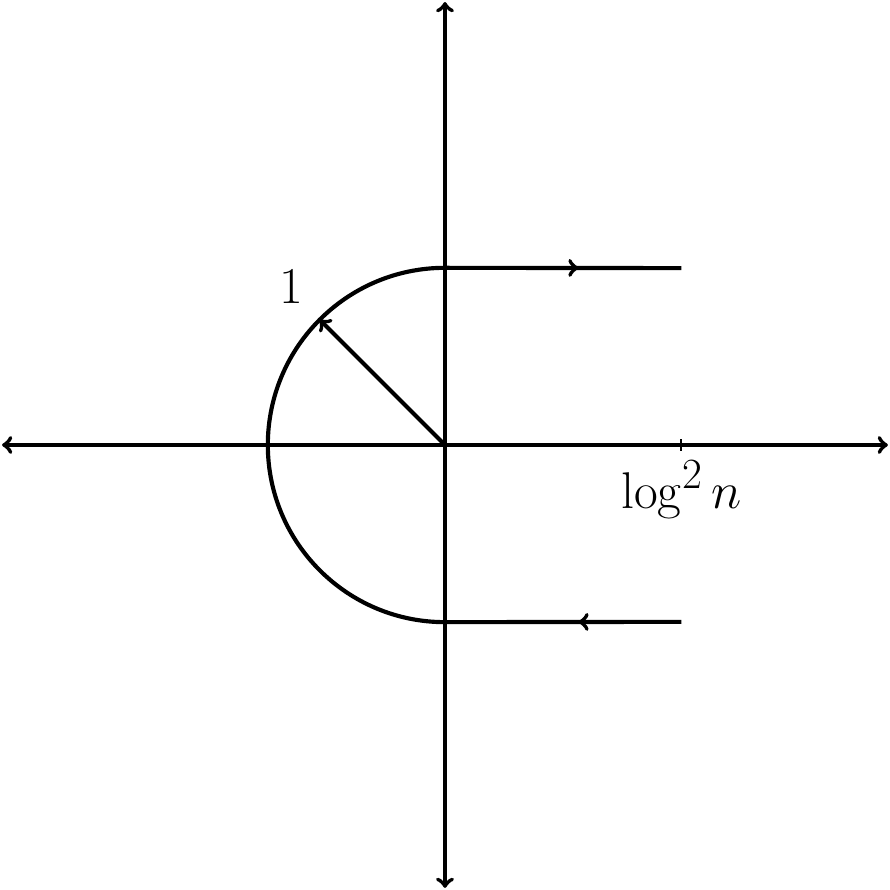}
			\caption{Hankel contour decomposition of $\Gamma$ (left), and contour of $\gamma'$ (right).}
			\label{fig:hankel}
		\end{center}
	\end{figure}
	 
	What remains is to bound the remaining part of the integral. 
	Using the expansions from~\eqref{eq:ghexpanded} we directly get
	\begin{align*}
		f\left( \rho \left(1 + \frac{\log^2 n + i}{n}\right),e^{\frac{it}{\sqrt{n}}} \right) &= \LandauO\left( \frac{n}{\log^2 n} \right).
	\end{align*}
	The rest of the proof of the weak limit theorem follows by standard computations.
	
	In the proof of the local limit theorem we get a different polar singularity than in the Rayleigh case of the mapping $u \mapsto f(z,u)$. 
	For $z=\rho\left(1+\frac{s}{n}\right)$ and $u_0 = 1 + \frac{t_0}{\sqrt{n}}$ we get
	\begin{align*}
		t_0 &=  \frac{\rho g_z(\rho,1)}{h_u(\rho,1)} \sqrt{-s} + \LandauO\left( \frac{\sqrt{|s|}}{n} \right),
	\end{align*}
	with residue
	\begin{align*}
		\frac{1}{h_u(\rho,1)} \sqrt{\frac{n}{-s}} \left( 1 + \LandauO\left( \frac{\sqrt{|s|}}{n} \right) \right).
	\end{align*}
	Then the same steps as in the proof of~\cite{DrSo97} yield the result. The suitable range for $k$ follows from the stated error terms because the main term behaves for $k=k_0 \sqrt{n \log n}$ like $\LandauO(n^{-\frac{(k_0/\sigma)^2+1}{2}})$ and the second error term like $\LandauO(\frac{k_0 \sqrt{\log n}}{n})$.
\end{proof}

Let us compare this result with the one of the corresponding Rayleigh limit theorem~\cite[Theorem~1]{DrSo97} and give an intuitive explanation of the underlying idea.
They have most of their conditions in common, including the key assumption of a square-root decomposition~\eqref{eq:decompFinv}.
Indeed, their only difference is that the Taylor expansions of the functions $g(z,u)$ and $h(z,u)$ in~\eqref{eq:decompFinv} start in the half-normal case with more zero terms; compare Table~\ref{tab:differences}. This increases the order of the singularity which then changes the nature of the limit law.

\begin{table}[ht]
\begin{center}
\begin{tabular}{ccc}
	\toprule
	& Rayleigh & Half-normal  \\
	\midrule
$g(\rho,1)$ & $=0$ & $=0$ \\
$g_u(\rho,1)$ & $<0$ & $=0$ \\
$g_{uu}(\rho,1)$ & arbitrary & $=0$ \\
$g_z(\rho,1)$ & arbitrary & $\neq 0$ \\
\midrule
$h(\rho,1)$ & $>0$ & $=0$ \\
$h_u(\rho,1)$ & arbitrary & $\neq 0$ \\
	\bottomrule
\end{tabular}
\caption{Comparing conditions on $g(z,u)$ and $h(z,u)$ from the square-root decompositions~\eqref{eq:decompFinv} of the Rayleigh Theorem~\cite[Theorem~1]{DrSo97} and the half-normal Theorem~\ref{theo:theo4}.}
\label{tab:differences}
\end{center}
\end{table}

From a technical point of view, the main observation is that the contributions of $g(z,u)$ and $h(z,u)\sqrt{1-z/\rho}$ are in a suitable region $z \sim \rho$ and $u \sim 1$ of the same asymptotic order of magnitude.
In particular, the decomposition $g(z,u)+h(z,u)\sqrt{1-z/\rho}$ is after the substitutions $z = \rho\left(1 + \frac{s}{n} \right)$ and $u = e^{it/\sqrt{n}} = 1 + \frac{it}{\sqrt{n}} + \LandauO(n^{-1})$ of the order $(\sqrt{-s}+t)n^{-1/2}$ in the Rayleigh case and $(-s+t\sqrt{-s})n^{-1}$ in the half-normal case.
Observe that these two cases differ by a multiplicative factor $\sqrt{-s/n}$. 
In the proof above we have seen that this factor $\sqrt{-s}$ is responsible for the transition from a Rayleigh to a half-normal law; compare \cite[Lemma~6]{DrSo97} and Lemma~\ref{lem:charHalfnormal}.

\begin{example}
\label{ex:motzkin}
Consider unweighted unconstrained Motzkin walks with marked returns to zero, i.e.~points of altitude $y=0$. These are walks starting at $0$ composed of the steps $\{-1,0,1\}$.
In Equation~\eqref{eq:returnsBGF} we will see that their bivariate generating function is given by
\begin{align*}
	W(z,u) &= \frac{\sqrt{1+z}}{u \sqrt{1+z} (1-3z) + (1-u) \sqrt{1-3z}}.
\end{align*}
Due to $W(z,1) = \frac{1}{1-3z}$, we have $\rho=\frac{1}{3}$. The decomposition~\eqref{eq:decompFinv} is valid and we get
\begin{align}
	\label{eq:ghspecial}
	&&g(z,u) &= u(1-3z) && \text{ and } &
	h(z,u) &= \frac{1-u}{\sqrt{1+z}}.&&
\end{align}
All other conditions of Hypothesis~[H'] and Theorem~\ref{theo:theo4} are satisfied since $g(z,u)$ and $h(z,u)$ are analytic for $|z|<1$, and the analytic continuation beyond $z=1/3$ and $u=1$ trivially holds. 
This proves that the limit distribution of the number of returns to zero of unweighted Motzkin walks is a half-normal distribution. In Theorem~\ref{theo:mainRetZero} we will see that this holds for general lattice path models with zero drift.
\end{example}

\begin{example} 
	\label{ex:motzkingenb}
	Let us consider the following (artificial) generalization of Example~\ref{ex:motzkin}. 
	Define for $b \in [2/3,4/3]$ the bivariate generating function 
\begin{align*}
	W_b(z,u) &:= \frac{\sqrt{1+z}}{u \sqrt{1+z} (1-3z)^b + (1-u) \sqrt{1-3z}}.
\end{align*}
Note that it has non-negative coefficients and it specializes for $b=1$ to the generating function from Example~\ref{ex:motzkin}, i.e., $W_1(z,u)=W(z,u)$.
Hence, it makes sense to define a random variable $Y_n$ distributed according to the $n$-th coefficient of $z$. 
Then the same methods as presented in the proof of Theorem~\ref{theo:theo4} are applicable, yet giving a different limit distribution if $b \ne 1$.
\end{example} 

\begin{remark}
Let us here comment on another method to deduce the limit distribution: the method of moments; see e.g.~\cite{hwang2018asymptotic}. 
It proves convergence of moments which then implies weak convergence. 
For example, this method shows that the random variable $Y_n$ defined in Example~\ref{ex:motzkingenb} satisfies for $r >0$ and $n \to \infty$ 
\begin{align*}
	\E[Y_n^r] &\sim n^{(b-1/2)r} \left(\frac{\sqrt{3}}{2}\right)^r \frac{\Gamma(r+1) \Gamma(b)}{\Gamma\left( b + \frac{2b-1}{2}r \right)}.
\end{align*} 
The moments of the rescaled random variable $\tilde{Y}_n := \frac{2}{\sqrt{3}} \frac{Y_n}{n^{b-1/2}}$ satisfy Carleman's condition~\cite[pp.~189-220]{Carleman23}. Therefore, by the limit Theorem of Fr{\'e}chet and Shohat~\cite[p.~536]{frsh31}, $\tilde{Y}_n$ converges in distribution to a unique distribution given by the moments above. 
\end{remark}

\begin{remark}
	\label{rem:strongermaintheo}
	The assumption of a constant singularity in $z$ given by $\rho$ can be weakened to a singularity $\rho(u) = \rho(1) + \LandauO((u-1)^3)$, i.e.,~$\rho'(1)=\rho''(1)=0$. However, to our knowledge no example is known where $\rho(u)$ is not constant in a neighborhood of $u \sim 1$.
	Additionally, this more general setting should lead to new limit theorems if $\rho'(1)\neq 0$ or $\rho''(1)\neq 0$. Yet again, to our knowledge no natural example is known. 
\end{remark}

Before we proceed, let us highlight the common (technical) link between our subsequent applications. In all four examples the decomposition~\eqref{eq:decompFinv} will have the generic structure
\begin{align*}
	\frac{1}{f(z,u)} &= \tilde{g}(z,u)\left(1-\frac{z}{\rho}\right) + \tilde{h}(z,u) (1-u) \sqrt{1 - \frac{z}{\rho}},
\end{align*}
with $\tilde{g}(\rho,1) \neq 0$, $\tilde{h}(\rho,1) \neq 0$, and $f(z,u)$ is analytically continuable in the necessary domain for Hypothesis~[H'] to hold; see e.g.~\eqref{eq:ghspecial}. This special form of $g(z,u)$ and $h(z,u)$ guarantees Theorem~\ref{theo:theo4} to hold and gives a half-normal distribution. Yet, the scheme does not need such a special factorization and holds in a more general setting.
Thus, it would be interesting if other ``natural'' appearances of such situations (and half-normal distributions in general) exist. 
So far we know of one such appearance in number theory~\cite{ggiv07}.

\section{Applications to one dimensional lattice path counting}
\label{sec:motzprop}

In this section we show how Theorem~\ref{theo:theo4} solves many problems on aperiodic lattice paths and thereby generalizes many known results.
The necessary background on lattice paths is summarized in Section~\ref{sec:typesofpaths}, which is intended for 
readers unfamiliar with the exposition of Banderier and Flajolet \cite{bafl02} or related results.
The following examples are motivated by the very nice presentation of Feller \cite[Chapter~III]{fell68} on one-dimensional symmetric, simple random walks; see also~\cite[Chapter~12]{GrinsteadSnell2003Probability}. Therein, the discrete time stochastic process~$(S_n)_{n \geq 0}$ is defined by $S_0 = 0$ and $S_n = \sum_{j=1}^n X_j$, $n \geq 1$, where the $(X_i)_{i\geq 1}$ are i.i.d.~Bernoulli random variables with $\PR(X_i = 1) = \PR(X_i = -1) = \frac{1}{2}$. 
This directly corresponds to the step polynomial $P(u) = \frac{1}{2u} + \frac{u}{2}$. 
In particular compare \cite[Problems~$9$-$10$]{fell68} and \cite[Remark of Barton]{SkSh57} for returns to zero of symmetric and asymmetric random walks, respectively. Furthermore, we refer to \cite[Chapter~III.$5$]{fell68} for sign changes, and to \cite[Chapter~III.$7$]{fell68} for the height. 
Note that this area is still an active field of research with many different applications.
See for example \cite{dobl15} on an application of Stein's method on these parameters in which bounds for the convergence rate in the Kolmogorov and the Wasserstein metric are derived, 
\cite{BenNaimKrapivskyRandonFurling2016Maxima} where the maxima of two random walks are analyzed, 
and \cite{DevroyeLudosiNeu2013Prediction} for applications to machine learning.

\begin{table}[ht]
	\begin{center}
	\newcommand{\mygap}{2em} 
	\begin{tabular}{c@{\hspace*{3em}}
	                c@{\hspace*{\mygap}}
	                c@{\hspace*{\mygap}}
	                c}
		\toprule drift      & returns to zero & sign changes & height \\
		\midrule $\delta<0$ & 
			$\operatorname{Geom}\left( \frac{p_{-1}-p_{1}}{P(1)} \right)$ & 
			$\operatorname{Geom}\left( \frac{p_{1}}{p_{-1}} \right) $ & 
			$\operatorname{Geom}\left( \frac{p_{1}}{p_{-1}} \right) $
			\\
		%
		$\delta=0$ & 
			$\Hc\left(\sqrt{\frac{P(1)}{P''(1)}}\right)$ & 
			$\Hc\left(\frac{1}{2} \sqrt{\frac{P''(1)}{P(1)}}\right) $ & 
			$\Hc\left(\sqrt{\frac{P''(1)}{P(1)}}\right) $
			\\
		%
		$\delta>0$ & 
			$\operatorname{Geom}\left( \frac{p_{1}-p_{-1}}{P(1)} \right)$ & 
			$\operatorname{Geom}\left( \frac{p_{-1}}{p_{1}} \right)$ & 
			Normal distribution
			\\
		\bottomrule
  \end{tabular}
\end{center}
\caption{Limit laws for Motzkin walks with step polynomial $P(u) = p_{-1} u^{-1} + p_{0} + p_{1} u$ with drift $\delta = P'(1)$.
Let $X_n$ be the random variable of the respective parameter of Motzkin walks with $n$ steps. 
For the continuous distributions a rescaling is necessary: 
$\frac{X_n}{\sqrt{n}}$ for the half-normal distribution; $\frac{X_n - \mu n}{\sqrt{\sigma^2 n}}$ for the normal distribution. 
}
\label{tab:compretsign}
\end{table}

For the sake of brevity we will only mention the weak convergence law. However, in all cases the local law and the asymptotic expansions for mean and variance hold as well.
In Table~\ref{tab:compretsign} the different results of this section for the case of Motzkin paths are compared.
As can be seen, the result will depend on the sign of the drift.

\subsection{Lattice paths and the kernel method}
\label{sec:typesofpaths}

Let us start with an overview of needed concepts and definitions. 

\begin{definition}
A {\it step set} $\stepset \subset \Z$ is a finite set of integers. 
The elements of $\stepset$ are called \emph{steps}. 
A \emph{lattice path} $\walksym = (s_1,\ldots,s_n)$ is a sequence of steps, such that $s_j \in \stepset$ together with a starting point $\walk{0} \in \Z$. 
The \emph{length}~$|\walksym|$ of a lattice path is equal to the number of its steps. 
\end{definition}

Throughout this work we assume the starting point $\omega_0=0$. 
To avoid pathological cases we assume that the step set contains at least one negative and one positive number. 
Although the paths are one dimensional objects, it proves convenient to picture them as two dimensional directed paths.
To do so, each step $s_j$ is associated with the two dimensional jump $(1,s_j)$. 
This leads to the equivalent interpretation of the \emph{geometric realization} of a path by a sequence of vertical altitudes $(\walk{0},\walk{1},\ldots,\walk{n})$ where $\walk{i}-\walk{i-1} = s_i$ for $i=1,\ldots,n$.
Classical examples are \emph{Dyck paths} defined by the step set $\stepset = \{-1,1\}$ and \emph{Motzkin paths} defined by the step set $\stepset = \{-1,0,1\}$.  
For more details we refer to~\cite{motz48,dosh77,bern99}.

	Depending on the spatial constraints we consider the following types of lattice paths (see Table~\ref{tab:dirTypes}).
	
	\begin{definition}
	\label{def:pathtypes}
		A {\it bridge}\index{lattice path!bridge} is a path that ends on the $x$-axis, i.e.~$\walk{n}=0$. 
		A {\it meander}\index{lattice path!meander} is a path that always stays above the $x$-axis, i.e.~$\walk{i} \geq 0$.
		An {\it excursion}\index{lattice path!excursion} is a path that is at the same time a meander and a bridge. 
	\end{definition}
	
		Their generating functions have been fully characterized in~\cite{bafl02} by means of analytic combinatorial techniques, which lie also at the core of our approach. The standard reference for this body of techniques is the authoritative book by Flajolet and Sedgewick~\cite{flaj09}.

\renewcommand{\baselinestretch}{1.1}
\begin{table}[ht]
	\begin{center}
	\begin{tabular}{|c|c|c|}
		\hline                        & ending anywhere & ending at $0$ \\
		\hline 
				                              & \multirow{3}{*}{\includegraphics[width=4cm]{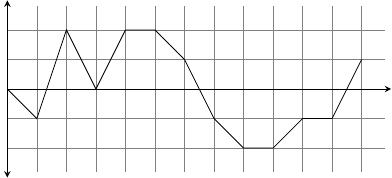}} & \multirow{3}{*}{\includegraphics[width=4cm]{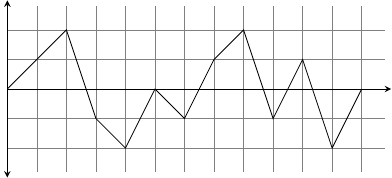}}\\
				                              & & \\
				                unconstrained	& & \\
				                	(on $\Z$)	  & & \\
		                                  &  walk/path                   & bridge\\
		                                  &  $W(z) = \frac{1}{1-zP(1)}$          & $B(z) = z \sum\limits_{i=1}^c\frac{u_i'(z)}{u_i(z)}$\\ 
	\hline 
				                   & \multirow{3}{*}{\includegraphics[width=4cm]{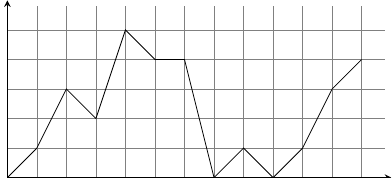}} & \multirow{3}{*}{\includegraphics[width=4cm]{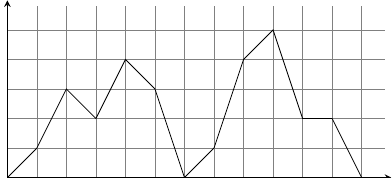}}\\
				                   & & \\
				                constrained	& & \\
				                (on $\Z_{\geq 0}$)	& & \\
		                                &  meander                       & excursion\\
		                                &  $M(z) = \frac{1}{1-zP(1)}\prod\limits_{i=1}^c(1-u_i(z))$          & $E(z) = \frac{(-1)^{c-1}}{p_{-1}z}\prod\limits_{i=1}^c u_i(z)$\\
		\hline
\end{tabular}
\end{center}
\caption{The four types of paths: walks, bridges, meanders, and excursions; and the corresponding generating functions {\cite[Fig.~1]{bafl02}}.}
\label{tab:dirTypes}
\end{table}
\renewcommand{\baselinestretch}{1.0}

	\begin{remark}
		In the context of Motzkin paths we refer to Motzkin walks/meanders/bridges/excursions with the meaning of walks/meanders/bridges/excursions with Motzkin steps.
	In common literature Motzkin paths are often defined as Motzkin excursions, e.g.~in \cite{dosh77}, and Motzkin meanders as their prefixes. 
	\end{remark}
			
		For a given step set $\stepset$, we define the respective \emph{system of weights} 
	as $ \{p_s : s \in \stepset\}$ where $p_s >0$ is the associated weight with step $s \in \stepset$. 
		The \emph{weight of a path} is defined as the product of the weights of its individual steps. 
	For example, when all weights are equal to $1$, every path has weight $1$, which is precisely the unweighted model. 
	Conversely, if $\sum_{s \in \stepset} p_s = 1$ it is a probabilistic model of paths, i.e.,~step $s$ is chosen with probability $p_s$.
		
	The following definition is the algebraic link between weights and steps. 
			
	\pagebreak
	\begin{definition}
		The \emph{step polynomial} of $\stepset$ is defined as the polynomial in $u, u^{-1}$ (a Laurent polynomial)
		\begin{align*}
			P(u) := \sum_{j=1}^m p_j u^{s_j}.
		\end{align*}
		Define $c := - \min_j s_j$ and $d := \max_j s_j$ as the two extreme step sizes, and assume throughout $c,d >0$ to avoid trivial cases. The \emph{kernel equation} is defined by
		\begin{align}
			\label{eq:kerneleq}
			1-zP(u) &= 0, &&\text{ or equivalently } &
			u^c - z(u^c P(u)) &=0.
		\end{align}
		The quantity $K(z,u) := u^c - z u^c P(u)$ is called \emph{kernel}.
	\end{definition}
	
	The kernel plays a crucial \role in the asymptotic analysis of lattice paths. 
	The method behind this analysis is known as the \emph{kernel method}.
	For more details on its history the interested reader is referred to \cite[Chapter~1]{BanderierWallner16}. 
	At the heart of it lies the observation that the kernel equation is of degree $c+d$ in $u$, and therefore possesses generically $c+d$ roots. These correspond to branches of an algebraic curve given by the kernel equation. From the theory of algebraic curves and Newton-Puiseux series, for $z$ near $0$ one obtains $c$ ``small branches'' that we call $u_1(z), \ldots, u_c(z)$ and $d$ ``large branches'' $v_1(z), \ldots, v_d(z)$. For being well-defined, we restrict ourselves to the complex plane slit along the negative real axis. 
	
	They are called ``small branches'' because they satisfy $\lim_{z \to 0} u_i(z) = 0$, whereas the ``large branches'' satisfy $\lim_{z \to 0} |v_i(z)| = \infty$. 
	Banderier and Flajolet~\cite{bafl02} showed, that the generating functions of bridges, excursions and meanders can be expressed in terms of the small branches and the step polynomial; see Table~\ref{tab:dirTypes}.
	Furthermore, they derived that there is exactly one small and one large branch that is real and positive near $0$. 
	Let these two branches be $u_1(z)$ and $v_1(z)$, respectively, and call them \emph{principal branches}. 
	Their analytic properties are responsible for the asymptotic behavior of bridges, excursions, and meanders; compare \cite[Theorems 3 and 4]{bafl02}. 
	Let us recall the results and key definitions which are important for our exposition.
	
	The \emph{structural constant} $\tau$ is the unique positive real number $\tau>0$, such that $P'(\tau)=0$; see~\cite[Lemma~1]{bafl02}. 
	From it we define the \emph{structural radius} as 
	\begin{align*}
		\rho &:= \frac{1}{P(\tau)}.
	\end{align*}
		
	A walk is called \emph{periodic} with period $p$ if there exists a polynomial $H(u)$ and integers $b \in \Z$ and $p \in \N$, $p>1$ such that $P(u) = u^{b} H(u^p)$. Otherwise its called \emph{aperiodic}. 
	A systematic approach to deal with periodic cases was derived in~\cite{BanderierWallner16}.	
	Therefore the subsequent condition of aperiodicity imposes no significant restriction to the models.
	Then, for an aperiodic step set the principal branches $u_1(z)$ and $v_1(z)$ are analytic on $z \in (0,\rho)$ and they satisfy, respectively, the following singular expansion for $z \to \rho^-$, with $C := \sqrt{2 \frac{P(\tau)}{P''(\tau)}}$:
	\begin{align}
		\label{eq:u1asy}
		\tau \mp C \sqrt{1-\frac{z}{\rho}} + \LandauO\left(1 - \frac{z}{\rho} \right).
	\end{align}

	The previous result is a direct consequence of the implicit function theorem. 	
	We end this section with an extension of this result which we will use in the sequel to prove the conditions of Hypothesis~[H'].

\begin{proposition}
	\label{prop:decompu1}
	Let $u_1(z)$ and $v_1(z)$ be the principal small and large branches of the kernel equation $1-zP(u)=0$. Then there exists a neighborhood $\neigh$ of $\rho$ such that for $z \to \rho$ in $\neighrayrho$ they have a local representation of the kind
	\begin{align*}
		a(z) + b(z) \sqrt{1-z/\rho},
	\end{align*}
	where $a(z)$ and $b(z)$ are analytic functions for every point $z \in \neighrayrho$, $z \neq z_0$. We have $a(\rho)=\tau$, and $b(\rho) = -C$ for $u_1(z)$ or $b(\rho) = C$ for $v_1(z)$, respectively. 
	The other branches $u_2(z), \ldots, u_c(z)$ and $v_2(z), \ldots, v_d(z)$ are analytic in a neighborhood of $\rho$.
\end{proposition}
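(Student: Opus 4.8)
The plan is to localize the kernel equation around the critical point $(\rho,\tau)$, apply the singular implicit function theorem there, and then globalize the coefficient functions by symmetric-function formulas. First I would rewrite the kernel equation as $F(z,u):=1-zP(u)=0$; since $K(z,u)=u^{c}F(z,u)$ and $\tau>0$, the factor $u^{c}$ is a nonvanishing analytic unit near $u=\tau$, so the branches of $K$ through $\tau$ coincide with those of $F$. At $(\rho,\tau)$ one has $F(\rho,\tau)=1-\rho P(\tau)=0$, $F_{u}(\rho,\tau)=-\rho P'(\tau)=0$ by the defining property of the structural constant, while $F_{uu}(\rho,\tau)=-\rho P''(\tau)\neq0$ for an aperiodic step set (this is exactly what underlies the square-root expansion~\eqref{eq:u1asy}). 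Hence $F$ vanishes to order precisely two in $u$ at $(\rho,\tau)$, and Weierstrass preparation (equivalently, the singular implicit function theorem \cite[Lemma~VII.3]{flaj09}) gives a factorization
\[
 F(z,u)=Q(z,u)\Big((u-\tau)^{2}+a_{1}(z)(u-\tau)+a_{0}(z)\Big)
\]
in a bidisc around $(\rho,\tau)$, with $Q$ analytic and nonvanishing there, and $a_{0},a_{1}$ analytic in $z$ near $\rho$ with $a_{0}(\rho)=a_{1}(\rho)=0$. Solving the quadratic, the two branches emanating from $(\rho,\tau)$ are $\tau-\tfrac12 a_{1}(z)\pm\tfrac12\sqrt{\Delta(z)}$ with discriminant $\Delta(z)=a_{1}(z)^{2}-4a_{0}(z)$, analytic near $\rho$ with $\Delta(\rho)=0$.

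The second step is to see that this zero is simple. Comparing with~\eqref{eq:u1asy}, the difference of the two branches is $u_{1}(z)-v_{1}(z)=-2C\sqrt{1-z/\rho}+\LandauO(1-z/\rho)$, so $\Delta(z)=(u_{1}(z)-v_{1}(z))^{2}=4C^{2}(1-z/\rho)+\LandauO\big((1-z/\rho)^{2}\big)$ since $\Delta$ is analytic; thus $\Delta'(\rho)=-4C^{2}/\rho\neq0$. Therefore $\Delta(z)=(1-z/\rho)\,\widetilde{G}(z)$ with $\widetilde{G}$ analytic near $\rho$ and $\widetilde{G}(\rho)=4C^{2}>0$, so $\sqrt{\Delta(z)}=\sqrt{\widetilde{G}(z)}\sqrt{1-z/\rho}$ with $\sqrt{\widetilde{G}}$ analytic near $\rho$. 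Setting $a(z)=\tau-\tfrac12 a_{1}(z)$ and $b(z)=\mp\tfrac12\sqrt{\widetilde{G}(z)}$, and matching leading terms with~\eqref{eq:u1asy} to fix the signs, yields $u_{1}(z)=a(z)+b(z)\sqrt{1-z/\rho}$ with $a(\rho)=\tau$, $b(\rho)=-C$, and likewise $v_{1}(z)$ with $b(\rho)=C$.

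To obtain the representation on all of $\neighrayrho$, not merely near $\rho$, I would extend $a$ and $b$ by the intrinsic formulas $a(z)=\tfrac12\big(u_{1}(z)+v_{1}(z)\big)$ and $b(z)^{2}=\big(u_{1}(z)-v_{1}(z)\big)^{2}/\big(4(1-z/\rho)\big)$. The right-hand sides are analytic wherever $u_{1},v_{1}$ are, the simple zero of the numerator at $\rho$ cancelling the denominator; since $u_{1},v_{1}$ are analytic on $(0,\rho)$ and continue analytically into a slit neighborhood of $\rho$, this gives $a,b$ analytic on $\neighrayrho$ away from the exceptional point $z_{0}$ (a possible zero of $b^{2}$, where the square root defining $b$ fails to be single-valued). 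Finally, for the remaining branches: since $F$ vanishes to order exactly two in $u$ at $(\rho,\tau)$, no third branch passes through $(\rho,\tau)$, and by aperiodicity $z=\rho$ is not a branch point for $u_{2},\dots,u_{c},v_{2},\dots,v_{d}$ on a suitable neighborhood (see \cite{bafl02}); equivalently $F_{u}\big(\rho,u_{j}(\rho)\big)=-\rho P'\big(u_{j}(\rho)\big)\neq0$, and similarly for the $v_{j}$, so the classical implicit function theorem gives analyticity of these branches near $\rho$.

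I expect the main obstacle to be the bookkeeping in the last two steps: certifying that the coincidence of branches at $(\rho,\tau)$ is of pure square-root type (vanishing order exactly two, which rests on $P''(\tau)>0$) and that no other branches collide or become singular on the neighborhood of $\rho$ in play. Both points rely on the aperiodicity hypothesis and on the analytic facts about $\tau$ and $P''(\tau)$ quoted from \cite{bafl02}, rather than on any new computation; once they are in place the representation and the locations of $a(\rho)$ and $b(\rho)$ follow directly from the quadratic formula.
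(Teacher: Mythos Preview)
Your proposal is correct and follows essentially the same approach as the paper: both apply the singular implicit function theorem/Weierstrass preparation to $\Phi(z,u)=1-zP(u)$ at $(\rho,\tau)$ after verifying $\Phi(\rho,\tau)=0$, $\Phi_{u}(\rho,\tau)=0$, $\Phi_{uu}(\rho,\tau)\neq0$, $\Phi_{z}(\rho,\tau)\neq0$, and then handle the remaining branches via the ordinary implicit function theorem. Your explicit discriminant computation and the symmetric-function formulas for $a(z)$ and $b(z)$ are additional detail rather than a different idea; the paper simply cites Weierstrass preparation for the analyticity of $a$ and $b$ and factors out $(u-u_1(z))(u-v_1(z))$ to treat the non-principal branches.
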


\begin{proof}
The branches $u(z)$, which we use as a shorthand for $u_i(z)$ and $v_i(z)$, are implicitly defined by the kernel equation~\eqref{eq:kerneleq}: $\Phi(z,u(z))=0$, where $\Phi(z,u)=1-zP(u)$. We will apply the singular implicit function theorem, in particular the version given as in \cite[Lemma VII.3]{flaj09}. Firstly, it is easy to check that $\Phi(z,u)$ satisfies the following conditions confirming the hypotheses: $\Phi(\rho,\tau)=0$, $\Phi_z(\rho,\tau) = -\rho^{-1} \neq 0$, $\Phi_u(\rho,\tau)= 0$, and $\Phi_{uu}(\rho,\tau) = -\rho P''(\tau) \neq 0$. Note that the last equation is not equal to $0$ because $P(u)$ is a convex function for real values of $u$. 
	
	This gives two possible solutions $y_1(z)$ and $y_2(z)$ which correspond to the principal small branch~$u_1(z)$ and the principal large branch~$v_1(z)$, respectively.
	These fulfill the claimed asymptotic expansions at the singularity $z=\rho$.
	A simple calculation shows that they give the claimed values for $a(\rho)$ and $b(\rho)$. 
	Thus, we recovered the asymptotic expansion \eqref{eq:u1asy}.
	Finally, the analytic nature of $a(z)$ and $b(z)$ follows from the Weierstrass preparation theorem; we refer to \cite{drmo94,drmo94b}, for an analytic presentation \cite[Theorem B.5]{flaj09}, or for an algebraic presentation \cite[Chapter 16]{abhy90}.
	
	It remains to discuss the analytic character of the other small branches.
	Here, the claimed result follows from the analytic version of the implicit function theorem: Consider $\widetilde{\Phi}(z,u) := \frac{\Phi(z,u)}{(u-u_1(z))(u-v_1(z))}$. Solving this function for $u$ gives the solutions of $\Phi(z,u)=0$ not equal to $u_1(z)$ or $v_1(z)$. But $\widetilde{\Phi}_u(\rho,\tau) \neq 0$ and therefore, these solutions are analytic in a neighborhood of $\rho$.
\end{proof}

\subsection{Returns to zero}
\label{sec:returns}

A \emph{return to zero} is a point of a walk of altitude~$0$ different from the starting point; see Figure~\ref{fig:signchanges}.  
The limit law for the number of returns to zero in excursions~\cite{bafl02} and bridges~\cite{bawa15b} behaves like a negative binomial distribution,
while in the more general reflection-absorption model~\cite{bawa15b} it behaves (after suitable rescaling) either like a normal, a Rayleigh, or a negative binomial distribution. 
In this section we consider in a unified manner the limit law of returns to zero in walks, and thereby generalize the results of Feller~\cite[Problems~$9$-$10$]{fell68} and Barton~\cite[Remark of Barton]{SkSh57}.

A standard technique is the decomposition into a sequence of ``minimal'' bridges (sometimes called arches). 
These are bridges that touch the $x$-axis only at the beginning and at the end. 
Then, the number of returns to zero corresponds to the number of these minimal bridges. 
Observe that every walk can be decomposed into a maximal initial bridge and a walk that never returns to the $x$-axis%
; see Figure~\ref{fig:retzerotail}. Let us denote the  generating function of the second part, which we call its \emph{tail}, by $T(z)$. From the above decomposition we have $T(z) = \frac{W(z)}{B(z)}$.

\begin{figure}[ht]
	\begin{center}	
		\includegraphics[width=0.7\textwidth]{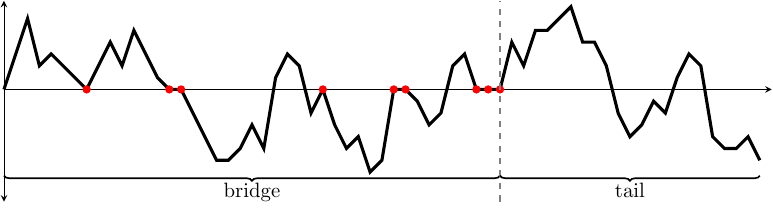}
		\caption{A walk with $9$ returns to zero decomposed into a bridge and a tail (a path that may cross but never returns to the $x$-axis). }
		\label{fig:retzerotail}
	\end{center}
\end{figure}

Let $W(z,u)$ be the bivariate generating function of walks of length marked by $z$ and returns to zero marked by $u$. 
Then, as the generating function of minimal bridges is $1-\frac{1}{B(z)}$, we get
\begin{align}
	\label{eq:returnsBGF}
	W(z,u) &= \frac{1}{1 - u \left(1-\frac{1}{B(z)}\right)} T(z) = \frac{W(z)}{u + (1-u)B(z)}.
\end{align}

Let $X_n$ be the random variable for the number of returns to zero of a random walk of length~$n$. Then, its law is given by
$$
	\PR(X_n = k) = \frac{[u^k z^n] W(z,u)}{[z^n] W(z,1)}. 
$$

\begin{theorem}[Limit law for returns to zero]
	\label{theo:mainRetZero}
	Let $X_n$ denote the number of returns to zero of a random, aperiodic walk of length $n$. Let $\delta=P'(1)$ be the drift. Then, for $n \to \infty$,
	\begin{enumerate} 
		\item if $\delta \neq 0$, we have convergence in law to a geometric distribution:
			\begin{align*}
				X_n \stackrel{d}{\to} \operatorname{Geom}\left(\frac{1}{B(1/P(1))}\right);
			\end{align*}
		\item if $\delta = 0$, we have convergence in law to a half-normal distribution:
			\begin{align*}
				\frac{X_n}{\sqrt{n}} \stackrel{d}{\to} \Hc\left(\sqrt{\frac{P(1)}{P''(1)}}\right).
			\end{align*}
	\end{enumerate}
\end{theorem}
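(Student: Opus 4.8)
\emph{Overview and the case split.} The plan is to work with $W(z,u)=W(z)/\big(u+(1-u)B(z)\big)$ from \eqref{eq:returnsBGF}, where $W(z)=1/(1-zP(1))$, and to split according to whether the drift $\delta=P'(1)$ vanishes. The dichotomy is dictated by the singularity structure: since $P$ is strictly convex on $\R_{>0}$ with a unique critical point $\tau$, the case $\delta=0$ forces $\tau=1$ and hence $\rho=1/P(\tau)=1/P(1)$, so the pole of $W(z)$ coincides with the square-root singularity of $B(z)$ at $\rho$; whereas $\delta\neq 0$ gives $\tau\neq 1$, $P(\tau)<P(1)$, so that $z_0:=1/P(1)<\rho$ and $B(z)$ is analytic at $z_0$ with $B(z_0)>1$ ($B$ being strictly increasing from $B(0)=1$ on $[0,\rho)$).

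\emph{Case $\delta\neq 0$ (sub-critical regime).} For $u$ in a small neighbourhood of $1$ the zeros in $z$ of $u+(1-u)B(z)$ all tend to $\rho$ as $u\to 1$: they solve $B(z)=u/(u-1)$, which forces $|B(z)|\to\infty$, hence $z\to\rho$, since by aperiodicity $B$ is analytic on $|z|<\rho$ and unbounded only at $\rho$. Thus for $u$ close enough to $1$ the only singularity of $z\mapsto W(z,u)$ in a disc $|z|\le z_0+\eta$ (with $\eta$ small, $z_0+\eta<\rho$) is the simple pole at $z_0$ inherited from $W(z)$. A standard meromorphic estimate gives, uniformly for such $u$, $[z^n]W(z,u)=\frac{P(1)^n}{u+(1-u)B(z_0)}\big(1+\LandauO(c^{-n})\big)$ for some $c>1$; dividing by $[z^n]W(z,1)=P(1)^n$, the probability generating function $\E[u^{X_n}]$ converges to $\frac{1/B(z_0)}{1-u\,(1-1/B(z_0))}$, which is the PGF of $\operatorname{Geom}(1/B(z_0))$. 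By the continuity theorem for PGFs this proves (1).

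\emph{Case $\delta=0$ (half-normal via Theorem~\ref{theo:theo4}).} Here $1/W(z)=1-z/\rho$, so $1/W(z,u)=u(1-z/\rho)+(1-u)(1-z/\rho)B(z)$, and it suffices to cast this in the shape \eqref{eq:decompFinv} and verify the hypotheses of Theorem~\ref{theo:theo4}. By Proposition~\ref{prop:decompu1}, $u_1(z)=a(z)+b(z)\sqrt{1-z/\rho}$ with $a,b$ analytic near $\rho$, $a(\rho)=\tau=1$, $b(\rho)=-C$; rationalising the logarithmic derivative $u_1'(z)/u_1(z)$ (legitimate since $u_1(\rho)=1\neq 0$) and adding the contributions of the other small branches, which are regular at $\rho$ by aperiodicity (cf.\ \eqref{eq:signBtau1}), one gets near $\rho$ a representation $B(z)=\beta(z)(1-z/\rho)^{-1/2}+\psi(z)$ with $\beta,\psi$ analytic and $\beta(\rho)=C/(2\tau)=C/2$. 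Multiplying by $(1-z/\rho)$ gives $(1-z/\rho)B(z)=\psi(z)(1-z/\rho)+\beta(z)\sqrt{1-z/\rho}$, hence
\[
 \frac{1}{W(z,u)}=\underbrace{\big(u+(1-u)\psi(z)\big)(1-z/\rho)}_{=:\,g(z,u)}\ +\ \underbrace{(1-u)\,\beta(z)}_{=:\,h(z,u)}\,\sqrt{1-z/\rho},
\]
with $g,h$ analytic near $(\rho,1)$ and $g$ affine in $u$. Hypothesis~[H'] holds: the coefficients of $W(z,u)$ are non-negative; $W(z,1)$ has radius $\rho$; $g(\rho,1)=0$; by aperiodicity $z=\rho$ is the only singularity on $|z|=\rho$, and $B$ — hence $1/W(z,u)$ — continues to a slit neighbourhood of radius $\rho+\delta'$, where $u+(1-u)B(z)$ has no zero for $|u-1|\in(\varepsilon/2,\varepsilon)$ since near $\rho$ the unbounded term $(1-u)\beta(z)(1-z/\rho)^{-1/2}$ dominates, while away from $\rho$ one uses $B\ge 1$. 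The remaining hypotheses of Theorem~\ref{theo:theo4} are then immediate: $g_z(\rho,1)=-1/\rho\neq 0$, $h_u(\rho,1)=-\beta(\rho)=-C/2\neq 0$, $h(\rho,1)=0$, $g_u(\rho,1)=(1-\psi(\rho))\cdot 0=0$, and $g_{uu}\equiv 0$. Hence $X_n/\sqrt{n}\stackrel{d}{\to}\Hc(\sigma)$ with $\sigma=\sqrt{2}\,h_u(\rho,1)/\big(\rho\,g_z(\rho,1)\big)=\sqrt{2}\,(-C/2)/(-1)=C/\sqrt{2}=\sqrt{P(1)/P''(1)}$, using $C=\sqrt{2P(\tau)/P''(\tau)}=\sqrt{2P(1)/P''(1)}$ for $\tau=1$; this is statement (2), and the local law and the expansions of $\E[X_n]$ and $\V[X_n]$ follow as well from Theorem~\ref{theo:theo4}.

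\emph{Where the difficulty lies.} Part (1) is the routine sub-critical case of a composition scheme. The substance of the proof is the analytic input for part (2): extracting from $B(z)=z\sum_i u_i'(z)/u_i(z)$ the decomposition $g+h\sqrt{1-z/\rho}$ with \emph{analytic} coefficients (a Weierstrass-preparation / rationalisation step built on Proposition~\ref{prop:decompu1}), and — the most delicate point — verifying the global clause of Hypothesis~[H'], namely that $u+(1-u)B(z)$ has no spurious zeros in an honest slit neighbourhood of $\rho$ and that aperiodicity keeps $\rho$ the unique dominant singularity. Once that is established, the conclusion is a direct application of Theorem~\ref{theo:theo4}.
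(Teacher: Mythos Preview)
Your proof is correct and follows essentially the same approach as the paper: the drift-based case split, the simple-pole extraction from $W(z)$ for $\delta\neq 0$, and for $\delta=0$ the decomposition of $(1-z/\rho)B(z)$ into analytic-plus-square-root pieces (the paper routes this through Lemma~\ref{lem:archdecomp} and \eqref{eq:signBtau1}, you through Proposition~\ref{prop:decompu1} directly) followed by a straight application of Theorem~\ref{theo:theo4}. Your treatment is in fact slightly more explicit than the paper's in checking the global clause of Hypothesis~[H'] and in exhibiting $g,h$ in closed form, but the underlying argument is the same.
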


\begin{proof}
	As a first step we compute the location of the dominant singularity of $W(z,u)$.
	We see that $[z^n] W(z,1) = [z^n]W(z) = P(1)^n$ and  we also directly get from its expression given in Table~\ref{tab:dirTypes} that $W(z)$ is singular at $\rho_1 := \frac{1}{P(1)}$. 
	Furthermore, due to the aperiodicity constraint $B(z)$ is only singular at $\rho$.
	We need to compare these two candidates.
	
	On the positive real axis the convex nature of $P(u)$ implies that $P(\tau)$ is its unique minimum. Hence, only two cases are possible: $\rho_1 < \rho$, if $\tau \neq 1$; or $\rho_1 = \rho$, if $\tau = 1$. 
	These two cases are also characterized by $\delta \neq 0$ or $\delta = 0$, respectively. 
	
	In the first case $W(z)$ is responsible for the dominant singularity. Then, we get (as $B(z)$ is analytic for $|z|<\rho$) for fixed $u \in (0,1)$
	\begin{align*}
		[z^n] W(z,u) &= \frac{1}{B\left(\rho_1\right)} \frac{P(1)^n}{1 - u\left(1-\frac{1}{B\left(\rho_1\right)}\right)} + \Landauo(P(1)^n).
	\end{align*}
	Thus, by the continuity theorem~\cite[p.~280]{fell68} the limit distribution is a geometric distribution with parameter $\frac{1}{B\left(\rho_1\right)}$.
	
	In the second case $\tau=1$ or $\delta=0$, we will apply Theorem \ref{theo:theo4}. 
	We know that $B(z) = z \sum_{j=1}^c \frac{u_j'(z)}{u_j(z)}$ is analytic for $|z| < \rho$; see \cite[Theorem~3]{bafl02}. Due to the aperiodicity, $\rho$ is the only singular point on the circle of convergence. Furthermore, $u_1(z)$ is the only small branch which is singular there, hence
	\begin{align}
		\label{eq:signBtau1}
		B(z) &= \frac{C}{2\tau \sqrt{1-z/\rho}} + \LandauO(1), \qquad C = \sqrt{2 \frac{P(\tau)}{P''(\tau)}},
	\end{align} 
	for $z \to \rho$. Then, Proposition~\ref{prop:decompu1} implies that $1/W(z,u)$ has a decomposition of the kind~\eqref{eq:decompFinv}.
	In particular, from \eqref{eq:signBtau1} we get that
	\begin{align*}
		\frac{1}{W(z,u)} &= \left(1-\frac{z}{\rho} \right)u + \frac{C}{2}(1-u) \sqrt{1-\frac{z}{\rho}} + \LandauO\left( \left(1-\frac{z}{\rho} \right) (1-u) \right), 
	\end{align*}
	for $z \to \rho$ and $u \to 1$, with $g(\rho,1)=h(\rho,1)=g_u(\rho,1)=g_{uu}(\rho,1)=0$; and $g_z(\rho,1)=-P(1)$ and $h_u(\rho,1) = - \sqrt{\frac{P(1)}{2 P''(1)}}$. 
	Finally, the analytic continuation follows from the algebraic nature of the involved generating functions; see e.g.~\cite[Lemma~VII.4]{flaj09}. 
	Note that $W(z,u)$ admits a pole $\rho(u)$ for $\Re(u)>1$ where $\rho(u)$ is analytic at $u=1$. 
	Furthermore, we have $|\rho(1+t i)|$ has a maximum for $t=0$, which implies that the domain can be extended beyond $\Re(u)=1$. 
	Hence, Theorem \ref{theo:theo4} yields the result.
\end{proof}

The derived results are summarized in Table~\ref{tab:compretsign} for the simple case of Motzkin paths with the step polynomial $P(u) = \frac{p_{-1}}{u} + p_0 + p_{1} u$. 
Note that in order to simplify the claims of the geometric cases we use the later derived result from Equation~\eqref{eq:motzkinBrho1}. 
All parameters are completely explicit in terms of the chosen weights $p_{-1},p_0$, and $p_1$.

This subsection nicely exemplified the simple applicability of our main theorem as a blackbox. 
What made the exposition simpler to follow was the additive nature of the parameter. 
In the next section we turn to a harder parameter of a different nature: the extremal parameter height.

\subsection{Height}
\label{sec:height}

For a path of length $n$ we define the \emph{height} as its maximally attained $y$-coordinate; see Figure~\ref{fig:motzkinheight}. Formally, let $\omega = (\omega_0,\omega_1,\ldots,\omega_n)$ be a walk of length $n$.
Then its height is given by
$$
	\max_{k \in \{0,\ldots,n\}} \omega_k.
$$

\begin{figure}[ht]
	\begin{center}	
		\includegraphics[width=0.7\textwidth]{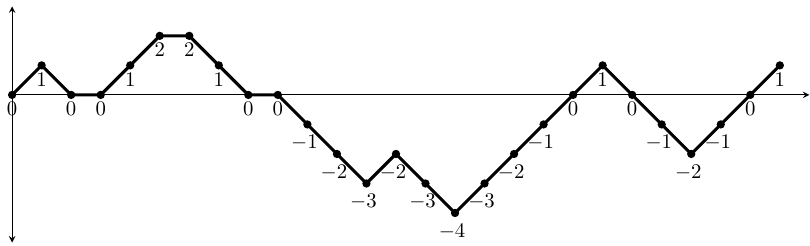}
		\caption{A Motzkin walk of height $2$. The relative heights are given at every node.}
		\label{fig:motzkinheight}
	\end{center}
\end{figure}

We define the bivariate generating function $F(z,u) = \sum_{n,h \geq 0} f_{nh}z^n u^h$. The coefficient~$f_{nh}$ is equal to the number of walks of height~$h$ among walks of length~$n$. 
Let $M(z,u) = \sum_{n,h \geq 0} m_{nh} z^n u^h$ be the generating function of meanders, where $m_{nh}$ is the number of meanders of length~$n$ ending at final altitude~$h$. Banderier and Flajolet derived in \cite[Theorem~2]{bafl02} its closed-form as
\begin{align}
	\label{eq:meandersbf}
	M(z,u) &= \frac{\prod_{j=1}^c (u-u_j(z))}{u^c (1-zP(u))} = - \frac{1}{p_d z} \prod_{\ell=1}^d \frac{1}{u-v_{\ell}(z)}.
\end{align}

The combinatorial key result of this section is the following theorem which connects the generating function of walks of length $n$ and height $h$ with the one of meanders of length $n$ and final altitude $h$.

\begin{theorem}
	\label{theo:WienerHopf}
	The bivariate generating function of walks (where $z$ marks the length, and $u$ marks the height) 
	is given by
	\begin{align}
		\label{eq:Fzuheight}
		F(z,u) &= \frac{W(z)M(z,u)}{M(z)} = - \frac{1}{p_d z} \left(  \prod_{j=1}^c \frac{1}{1-u_{j}(z)} \right) \left( \prod_{\ell=1}^d \frac{1}{u-v_{\ell}(z)} \right).
	\end{align}
\end{theorem}

\begin{proof}
	Banderier and Nicod{\`e}me derived in \cite[Theorem 2]{bani10} the generating function $F^{[-\infty, h]}(z)$ for walks which do not cross the wall $y=h$. 
	From this we directly get the generating function $F^{[h]}(z)$ for walks that have height exactly $h$. 
	In order to increase readability we omit the dependency on $z$ in the following formulas involving $v_i(z)$.
	We have
	\begin{align*}
		F^{[h]}(z) 
			= \sum_{i=1}^d \frac{v_i-1}{1-zP(1)} \left(\frac{1}{v_i}\right)^{h+1} \!\!\! \prod_{\substack{1\leq j \leq d \\ j \neq i}} \frac{1-v_{j}}{v_{i}-v_j}.
	\end{align*}
	
	Then marking the height by $u$ and summing over all possibilities gives 
	\begin{align*}
		F(z,u) &= \sum_{h\geq 0} u^h F^{[h]}(z) = 
			\frac{\prod_{j=1}^d (1-v_j)}{1-zP(1)} \sum_{i=1}^d \frac{1}{u-v_i} \prod_{\substack{1\leq j \leq d \\ j \neq i}} \frac{1}{v_{i}-v_j}.
	\end{align*}
	In the first factor we use the factorization of the kernel given by $1-zP(1)=-z p_{d} \prod_{j=1}^c (1-u_j) \prod_{\ell=1}^d (1-v_\ell)$; compare~\eqref{eq:kerneleq}. 
	The remaining sum is then simply the partial fraction decomposition of $\prod_{i=1}^d \frac{1}{u-v_i}$. Therefore this gives the second equality of~\eqref{eq:Fzuheight}. 

	Finally, by the second representation in~\eqref{eq:meandersbf} we discover the formula for $M(z,u)$ in~\eqref{eq:Fzuheight}.
	Then, using the representation of $M(z)$ from Table~\ref{tab:dirTypes} we see that the remaining part is equal to $W(z)/M(z)$ because $W(z) = 1/(1-zP(1))$.
\end{proof}

We want to remark that the relation given by the first equality in~\eqref{eq:Fzuheight} can be interpreted as an instance of the famous 
 Wiener--Hopf factorization 
in probability theory~\cite[Chapter~XII]{Feller1971Probability2}. 
In our context, this identity is obviously directly related to the kernel equation~\eqref{eq:kerneleq}. Its simple structure suggests a combinatorial interpretation, or even a direct combinatorial proof. In order to answer this question, we will analyze it in more detail now. Let us start with its factor $\frac{W(z)}{M(z)} = \prod_{j=1}^c \frac{1}{1-u_{j}(z)}$. 

First, we introduce a new useful dualism.
Let $\walksym = (\walk{0},\walk{1},\ldots,\walk{n})$ be a path with $\walk{0}=\walk{n}=0$.
A \emph{non-negative excursion} $\walksym$ is a ``traditional'' excursion, i.e., $\walk{i} \geq 0$, whereas a \emph{non-positive excursion} $\walksym$ is defined by $\walk{i} \leq 0$. 

\begin{remark}
	\label{rem:posnegexc}
	Mirroring the steps along the $x$-axis it follows that the number of non-negative excursions of length $n$ is equal to the number of non-positive excursions of length $n$. 
\end{remark}

In the same manner, we introduce the notion of \emph{non-positive meanders} for walks $\walksym = (\walk{0},\walk{1},\ldots,\walk{n})$ with $\walk{0}=0$ and $\walk{i} \leq 0$ and denote their generating function by $M_{\leq 0}(z)$. Furthermore, let \emph{negative meanders} be non-positive meanders that never return to the $x$-axis (but start at $0$), and denote their generating function by $M_{<0}(z)$.
This generating function is computed in the next proposition and the result implies that it equals $W(z)/M(z)$.

\begin{proposition}	
	\label{prop:negmeanders}
	The generating functions of negative meanders and non-positive meanders are given by
	\begin{align*}
		M_{<0}(z) &= \prod_{j=1}^c \frac{1}{1-u_{j}(z)}, \\
		M_{\leq 0}(z) &= E(z) M_{<0}(z) = \frac{(-1)^{c-1}}{p_{-c} z} \prod_{j=1}^c \frac{u_j(z)}{1-u_{j}(z)}.
	\end{align*}
\end{proposition}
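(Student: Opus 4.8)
The plan is to establish the two identities by reducing everything to known Banderier--Flajolet formulas via the mirror bijection. First I would treat $M_{<0}(z)$. A strictly negative meander is a path that starts at $0$, stays strictly below the $x$-axis after the first step, i.e. stays in $\Z_{<0}$ for all positive times. Mirroring (the map $y\mapsto -y$) sends the step set $\stepset$ to the reflected step set $-\stepset$, with $c$ and $d$ interchanged, and bijectively maps strictly negative meanders of the original model to paths of the mirrored model that stay strictly above $0$, i.e. ``strictly positive meanders''. So it suffices to compute the generating function of strictly positive meanders (paths staying in $\Z_{>0}$ after step one) and then swap the \role{}s of the small branches $u_j$ and the large branches $v_j$ and of $c$ and $d$. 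A strictly positive meander is a meander that never touches the $x$-axis again; since an arbitrary meander decomposes uniquely as a sequence of excursions followed by a strictly positive meander, we get $M(z) = \frac{1}{1-E(z)}\,M_{>0}(z)$ where $M_{>0}(z)$ is the strictly-positive-meander generating function, hence $M_{>0}(z) = (1-E(z))M(z)$. Plugging in $M(z) = \frac{1}{1-zP(1)}\prod_{j=1}^c(1-u_j(z))$ and $E(z) = \frac{(-1)^{c-1}}{p_{-c}z}\prod_{j=1}^c u_j(z)$ from Table~\ref{tab:dirTypes}, together with $1-zP(1) = \prod_{j=1}^c(1-u_j(z))\cdot(\text{large-branch factors})$-type identities coming from the kernel factorization, should collapse $M_{>0}(z)$ to $\prod_{j}\frac{1}{1-(\text{large branches})}$; applying the mirror swap then yields $M_{<0}(z)=\prod_{j=1}^c\frac{1}{1-u_j(z)}$.

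For the second identity, a negative meander decomposes uniquely into a (possibly empty) negative excursion followed by a strictly negative meander that never again touches the $x$-axis — more precisely, a negative meander is a sequence of negative arches (negative-excursion pieces touching $0$ only at their endpoints) followed by a strictly negative tail. Equivalently, reading it as ``maximal initial negative excursion part, then strictly negative meander'': the initial part is an arbitrary negative excursion, whose generating function by Lemma~\ref{lem:posnegexc} and its mirrored analogue equals $E(z)$ (mirroring maps negative excursions bijectively to positive ones, so $E_-(z)=E(z)$). This gives $M_-(z) = E(z)\cdot M_{<0}(z)$ immediately. Substituting the formula for $M_{<0}(z)$ just obtained and $E(z) = \frac{(-1)^{c-1}}{p_{-c}z}\prod_{j=1}^c u_j(z)$ produces $M_-(z) = \frac{(-1)^{c-1}}{p_{-c}z}\prod_{j=1}^c\frac{u_j(z)}{1-u_j(z)}$, as claimed.

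The routine parts are the decomposition arguments (sequence constructions translate to geometric-series factors) and the substitution of Table~\ref{tab:dirTypes} formulas. The main obstacle is the middle step: verifying that, after mirroring, the ``strictly positive meander'' generating function $(1-E(z))M(z)$ of the original model genuinely equals $\prod_{j=1}^d\frac{1}{1-v_j(z)}$ in terms of the large branches, so that swapping back gives $\prod_{j=1}^c\frac{1}{1-u_j(z)}$. This requires the kernel factorization $1-zP(u) = \frac{(-1)^{d} p_d z}{u^c}\prod_{j=1}^c(u-u_j(z))\prod_{\ell=1}^d(u-v_\ell(z))$ evaluated and rearranged carefully at $u=1$, and matching signs and the leading coefficients $p_d$ versus $p_{-c}$ under the mirror (which interchanges $p_d\leftrightarrow p_{-c}$). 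Keeping the bookkeeping of these constants straight is where the real care is needed; once that is in hand, both displayed formulas drop out.
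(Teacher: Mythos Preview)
Your overall strategy---mirror bijection plus the ``excursion followed by strictly-below tail'' decomposition---is exactly what the paper does, only in the opposite order: the paper first computes $M_-(z)$ by observing that a negative meander is precisely a meander of the mirrored model (with jump polynomial $\widetilde P(u)=\sum p_{-i}u^i$, whose small branches are $\tilde u_i=1/v_i$), applies the known meander formula, rewrites via the kernel factorization, and only then extracts $M_{<0}(z)$ from $M_-(z)=E(z)M_{<0}(z)$. Going through $M_-$ first is slightly cleaner because the meander formula is ready-made, so no separate computation of $M_{>0}$ is needed.

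There is, however, a genuine error in your decomposition for $M_{>0}$. You write
\[
M(z)=\frac{1}{1-E(z)}\,M_{>0}(z),\qquad\text{hence}\qquad M_{>0}(z)=(1-E(z))M(z).
\]
This cannot be right: $E(z)$ is the generating function of \emph{all} excursions (including the empty one, so $E(0)=1$), not of arches, and a ``sequence of excursions'' is not a well-defined decomposition. The correct statement is that a meander factors uniquely as an \emph{excursion} (equivalently, a sequence of arches, counted by $E(z)$) followed by a strictly positive tail, giving $M(z)=E(z)\,M_{>0}(z)$ and thus $M_{>0}(z)=M(z)/E(z)$. With the large-branch forms $M(z)=-\frac{1}{p_d z}\prod_\ell\frac{1}{1-v_\ell}$ and $E(z)=\frac{(-1)^{d-1}}{p_d z}\prod_\ell\frac{1}{v_\ell}$ this yields $M_{>0}(z)=\prod_{\ell=1}^d\frac{v_\ell}{v_\ell-1}$; mirroring ($\tilde v_j=1/u_j$, $d\leftrightarrow c$) then indeed gives $M_{<0}(z)=\prod_{j=1}^c\frac{1}{1-u_j(z)}$. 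Once this is fixed, your second paragraph (the $M_-(z)=E(z)M_{<0}(z)$ step via Lemma~\ref{lem:posnegexc}) is correct and matches the paper verbatim.
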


\begin{proof}
	The key idea is that non-positive meanders are meanders after mirroring the coordinate system along the $x$-axis. By doing so, the step polynomial $P(u) = \sum_{i=-c}^d p_i u^i$ changes to the \emph{mirrored step polynomial}
	\begin{align*}
		\widetilde{P}(u) = \sum_{i=-d}^c p_{-i} u^i.
	\end{align*}
	The small branches $\tilde{u}_i(z)$, which satisfy $1-z\widetilde{P}(\tilde{u}_i(z)) = 0$ are given by
	$
		\tilde{u}_i(z) = \frac{1}{v_i(z)},
	$
	where $v_i(z)$ are the large branches of the original kernel equation $1-zP(u)=0$. Finally, by~\eqref{eq:meandersbf}  and because of $P(1)=\widetilde{P}(1)$ we get
	\begin{align*}
		M_{\leq 0}(z) &= \frac{\prod_{j=1}^d (1-\tilde{u}_j(z))}{1-zP(1)} 
				  = \frac{(-1)^{d-1}}{p_d z} \left( \prod_{j=1}^d \frac{1}{v_j(z)} \right) \prod_{j=1}^c \frac{1}{1-u_{j}(z)},
	\end{align*}
	due to the factorization of the kernel equation into linear factors of small and large branches. The first factor $\frac{(-1)^{d-1}}{p_d z} \left( \prod_{j=1}^d \frac{1}{v_j(z)} \right)$ is equal to the generating function of excursions $E(z)$.
	
	Finally, note that any non-positive meander can be uniquely decomposed into a maximal initial non-positive excursion and a  negative meander.
\end{proof}

The previous result translates the above mentioned Wiener--Hopf factorization into the world of generating functions: Decompose a walk of length $n$ into two parts by cutting it at its unique last maximum. The second part is, after shifting it down by its height, a negative meander counted by $M_{<0}(z)$. After rotating the first part by $180$ degrees it corresponds to a meander ending at the previous maximum which is counted by $M(z,u)$; compare~\eqref{eq:meandersbf}.
This gives an alternative proof of~\eqref{eq:Fzuheight}.

We now turn our attention back to the limit laws for the height of walks.
Let $X_n$ be the random variable for the height of a random walk of length $n$. Thus,
$$
	\PR(X_n = k) = \frac{[u^k z^n] F(z,u)}{[z^n] F(z,1)}. 
$$

The following theorem concludes this section with the governing limit laws for the height of walks. Note in particular the different rescaling factors in each case.
\begin{theorem}[Limit law for the height]
	\label{theo:height}
	Let $X_n$ denote the height of a walk of length $n$. Let $\delta=P'(1)$ be the drift, and $\rho_1 = 1/P(1)$. 
	\begin{enumerate} 
		\item For $\delta < 0$ the limit distribution is discrete and characterized in terms of the large branches:
			\begin{align*}
				\lim_{n \to \infty} \PR( X_n =k ) = [u^k] \omega(u), \qquad \text{ where } \omega(u) = \prod_{j=1}^d \frac{1-v_j(\rho_1)}{u - v_j(\rho_1)}.
			\end{align*}
		\item For $\delta = 0$ the standardized random variable converges to a half-normal distribution:
			\begin{align*}
				\frac{X_n}{\sqrt{n}} \stackrel{d}{\to} \Hc\left(\sqrt{\frac{P''(1)}{P(1)}}\right).
			\end{align*}
		\item For $\delta > 0$ the standardized random variable converges to a normal distribution:
			\begin{align*}
				\frac{X_n-\mu n}{\sigma \sqrt{n}} &\stackrel{d}{\to} \Nc\left( 0,1 \right), &
				\mu &= \frac{P'(1)}{P(1)}, &
				\sigma^2 &= 
					\frac{P''(1)}{P(1)} + \frac{P'(1)}{P(1)} - \left(\frac{P'(1)}{P(1)}\right)^2 \!\!.
			\end{align*}		
	\end{enumerate}
\end{theorem}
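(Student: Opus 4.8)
The plan is to treat the three drift regimes separately, using the closed form
$F(z,u) = \frac{W(z)}{M(z)} \cdot M(z,u) = -\frac{1}{p_d z}\left(\prod_{j=1}^c \frac{1}{1-u_j(z)}\right)\left(\prod_{\ell=1}^d \frac{1}{u-v_\ell(z)}\right)$
from \eqref{eq:Fzuheight}. The key structural facts I will exploit: the factor $\frac{W(z)}{M(z)} = M_{<0}(z) = \prod_{j=1}^c \frac{1}{1-u_j(z)}$ (Proposition~\ref{prop:negmeanders}) is singular precisely where the principal small branch $u_1(z)$ hits $\tau$, i.e.\ at $\rho$, while $W(z)$ alone is singular at $\rho_1 = 1/P(1)$. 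By the convexity of $P$, exactly as in the proof of Theorem~\ref{theo:mainRetZero}, one has $\rho_1 < \rho$ when $\tau\neq 1$ (equivalently $\delta\neq 0$) and $\rho_1 = \rho$ when $\tau=1$ (equivalently $\delta = 0$). So the location of the dominant singularity, and which factor produces it, is dictated by the sign of $\delta$.

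For $\delta < 0$: here $\tau < 1$, so $\rho_1 < \rho$, and the dominant singularity of $F(z,u)$ for $u$ near $1$ comes solely from $W(z)$; the branches $u_j(z)$ and $v_\ell(z)$ are all analytic at $\rho_1$, with $v_\ell(\rho_1) \neq 1$ because $\rho_1 < \rho$ and the small and large branches are separated there. A single-variable transfer theorem at the simple pole $z = \rho_1$ gives $[z^n]F(z,u) \sim \omega(u)\, P(1)^n$ with $\omega(u)$ the indicated finite product in the large branches evaluated at $\rho_1$; dividing by $[z^n]F(z,1) = P(1)^n$ and noting $\omega(1) = 1$ (the same Vandermonde-type identity used to simplify \eqref{eq:Fzuheight}, or directly $M(z,1)=M(z)$) shows $\PR[X_n = k] \to [u^k]\omega(u)$. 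One should check $\omega$ is a genuine probability generating function, which follows since the $v_\ell(\rho_1)$ have modulus $>1$ (the small branch $u_1(\rho_1)<1$ and $|v_\ell|\ge |v_1|>|u_1|$ forces this), making each $\frac{1-v_\ell(\rho_1)}{u-v_\ell(\rho_1)}$ a geometric-type factor; this is the same bridge/tail type decomposition seen for returns to zero.

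For $\delta = 0$: now $\rho_1 = \rho = 1/P(1)$, $\tau = 1$, and the singularity is of square-root type coming from the factor $\prod_j 1/(1-u_j(z))$, since $u_1(\rho) = \tau = 1$ and $1 - u_1(z) = C\sqrt{1-z/\rho} + \LandauO(1-z/\rho)$ by \eqref{eq:u1asy}. The strategy is to verify Hypothesis~[H'] for $c(z,u) := F(z,u)$ and apply Theorem~\ref{theo:theo4}. Writing $1/F(z,u) = -p_d z\,(1-u_1(z))\prod_{j=2}^c(1-u_j(z))\prod_{\ell=1}^d(u-v_\ell(z))$ and substituting $1-u_1(z) = C\sqrt{1-z/\rho}+\LandauO(1-z/\rho)$ while treating all other factors as analytic at $(\rho,1)$ (the other small branches are analytic at $\rho$ by Proposition~\ref{prop:decompu1}; the large branches are analytic at $\rho$ and $v_\ell(\rho) \neq 1$ for the non-principal ones, while $v_1(\rho) = \tau = 1$ so the factor $(u - v_1(z))$ is the one carrying the $u$-dependence that vanishes at $(\rho,1)$), one reads off the local representation $1/F(z,u) = g(z,u) + h(z,u)\sqrt{1-z/\rho}$. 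One then computes $g(\rho,1) = 0$ (since $u - v_1(\rho) = u-1$ vanishes at $u=1$), $g_u(\rho,1) = 0$ and $g_{uu}(\rho,1)=0$ (because $g$ is linear in the factor $(u-v_1(z))$, hence its $u$-dependence near $(\rho,1)$ is, to leading order, a constant times $(u-1)$ times something vanishing — so the second derivative vanishes), $h(\rho,1) = 0$ (the $\sqrt{\cdot}$ coefficient carries the same vanishing factor $(u-1)$), while $g_z(\rho,1) = -P(1) \neq 0$ and $h_u(\rho,1)\neq 0$ (proportional to $C$ and to the residual analytic factors, none of which vanish). Theorem~\ref{theo:theo4} then yields $X_n/\sqrt n \stackrel{d}{\to} \Hc(\sigma)$ with $\sigma = \sqrt 2\, h_u(\rho,1)/(\rho\, g_z(\rho,1))$; a careful bookkeeping of the constants ($C = \sqrt{2P(\tau)/P''(\tau)} = \sqrt{2P(1)/P''(1)}$, $\rho = 1/P(1)$, and the product of the analytic factors telescoping via the kernel factorization) should collapse to $\sigma = \sqrt{P''(1)/P(1)}$. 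The analytic-continuation clause of [H'] is inherited from the analyticity of all branches on a slit neighborhood of $\rho$ together with aperiodicity giving a unique dominant singularity.

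For $\delta > 0$: here $\tau > 1$, so $\rho_1 < \rho$ and, as in the $\delta<0$ case, the dominant singularity is the simple pole at $z=\rho_1$ coming from $W(z)$ — but now this is a \emph{Gaussian} regime because the parameter scales linearly. I expect the cleanest route is not the direct closed form but the decomposition underlying it (walk $=$ sequence of negative-excursion-then-up-step blocks followed by a negative meander, as in Figure~\ref{fig:motzkindecompheight} for the Motzkin case, generalized via Proposition~\ref{prop:negmeanders}), which exhibits $F(z,u)$ as $\frac{1}{1 - u\, S(z)}\cdot M_{<0}(z)$-type supercritical sequence where $S(\rho_1) = 1$ at the relevant evaluation — or alternatively one checks the conditions of the supercritical-composition / algebraic-singularity scheme of Drmota–Soria. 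The mean and variance constants $\mu = P'(1)/P(1)$ and $\sigma^2 = P''(1)/P(1) + P'(1)/P(1) - (P'(1)/P(1))^2$ then come from the standard quasi-powers computation: $[z^n]F(z,e^t)$ has dominant singularity at $\rho_1(t)$ a simple pole, with $\rho_1(t)$ determined implicitly, and $\log(\rho_1/\rho_1(t))$ has the $\mu, \sigma^2$ above as its first two Taylor coefficients at $t=0$. This regime is the main obstacle: it requires identifying the right implicit-function/quasi-powers setup and carrying out the two-derivative perturbation of the dominant singularity in $u$, which is more delicate than the single transfer-theorem arguments in the other two cases, and one must confirm the variance formula matches the stated $\sigma^2$ — this is where I would spend the bulk of the verification effort.
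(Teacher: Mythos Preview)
Your overall architecture matches the paper's: split by the sign of $\delta$, use a transfer argument at the simple pole $\rho_1$ for $\delta<0$, invoke Theorem~\ref{theo:theo4} for $\delta=0$, and appeal to a quasi-powers/supercritical scheme for $\delta>0$. The $\delta<0$ and $\delta>0$ sketches are essentially what the paper does (for $\delta>0$ the paper simply points to \cite{bafl02} and the perturbed supercritical scheme). One cosmetic slip: you have the inequalities between $\tau$ and $1$ reversed; $\delta<0$ means $P'(1)<0$, hence $1<\tau$, and $\delta>0$ gives $\tau<1$.

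There is, however, a genuine gap in the $\delta=0$ case. You write that ``the large branches are analytic at $\rho$'' and treat $(u-v_1(z))$ as an analytic factor. This is false for the \emph{principal} large branch: by \eqref{eq:u1asy} and Proposition~\ref{prop:decompu1}, $v_1(z)$ has the same square-root singularity at $\rho$ as $u_1(z)$, namely $v_1(z)=\tau+C\sqrt{1-z/\rho}+\LandauO(1-z/\rho)$. The paper's proof uses this explicitly: the product $(1-u_1(z))(u-v_1(z))$ expands as
\[
\bigl(C\sqrt{1-z/\rho}+\cdots\bigr)\bigl((u-1)-C\sqrt{1-z/\rho}+\cdots\bigr)
= -C^2(1-z/\rho)+C(u-1)\sqrt{1-z/\rho}+\cdots,
\]
and it is precisely the cross term $\sqrt{\cdot}\times\sqrt{\cdot}$ that produces the nonzero $g_z(\rho,1)$. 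If instead you treat $v_1$ as analytic with $v_1(\rho)=1$, then the analytic part of $(1-u_1(z))(u-v_1(z))$ is $(1-a(z))(u-v_1(z))$ with \emph{both} factors vanishing at $(\rho,1)$, which forces $g_z(\rho,1)=0$ and Theorem~\ref{theo:theo4} no longer applies. Your asserted value $g_z(\rho,1)=-P(1)$ cannot be obtained from the decomposition you describe; you need the square-root singularity of $v_1$ to get it. Once you incorporate the singular expansion of $v_1$, the verification of $g(\rho,1)=h(\rho,1)=g_u(\rho,1)=g_{uu}(\rho,1)=0$ together with $g_z(\rho,1),h_u(\rho,1)\neq 0$ goes through exactly as in the paper, and the constant $\sigma=\sqrt{P''(1)/P(1)}$ drops out.
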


\begin{proof}
	From the structure of the generating function in~\eqref{eq:Fzuheight} it is obvious that the result strongly depends on the limit law of the final altitude of meanders. This was analyzed in~\cite[Theorem~6]{bafl02}. 

	First, let us consider $\delta<0$. In this case it proves convenient to consider the equivalent representation of~\eqref{eq:Fzuheight} derived from the definition of the small and large branches and shown in~\eqref{eq:meandersbf} (set $u=1$):
	\begin{align*}
		F(z,u) &= \frac{1}{1-zP(1)} \prod_{\ell=1}^d \frac{1-v_{\ell}(z)}{u-v_{\ell}(z)}.
	\end{align*} 
	In this case we know that $\tau>1$, implying $\rho>\rho_1$ (equivalently, $P(1)>P(\tau)$) and that the dominant singularity arises at $z=\rho_1$. The product of the large branches is analytic for $|z|<\rho$; see~\cite{bafl02}. Hence, by standard methods (see e.g.~\cite[Theorem~VI.12]{flaj09}) we get the asymptotic expansion:
		\begin{align*}
			[z^n] F(z,u) &= P(1)^n \prod_{\ell=1}^d \frac{1-v_{\ell}(\rho_1)}{u-v_{\ell}(\rho_1)} + \Landauo(P(1)^n).
		\end{align*}
		
		Second, for drift $\delta=0$, we have $\tau=1$. Thus, $P(\tau)=P(1)$ and the singularity arises at $\rho=\rho_1=1/P(1)$. This means that the singularities of the two factors coalesce and we can apply Theorem~\ref{theo:theo4}.  
		
		Let $\varepsilon > 0$. Then for $|z-\rho|<\varepsilon$,  $|u-1|<\varepsilon$, and $\arg(z-\rho) \neq 0$ we consider
		\begin{align*}
			\frac{1}{F(z,u)} &= -p_d z (1-u_1(z))(u-v_1(z)) 
				\underbrace{\left(\prod_{j=2}^c (1-u_j(z))\right)}_{=: \bar{U}_1(z)}
				\underbrace{\left(\prod_{\ell=2}^d (u-v_{\ell}(z))\right)}_{=: \bar{V}_1(z,u)}.
		\end{align*}
		The products $\bar{U}_1(z)$ and $\bar{V}_1(z,u)$ are analytic in a neighborhood of $z \sim \rho$. However, the branches $u_1(z)$ and $v_1(z)$ both possess a square-root singularity; compare~\eqref{eq:u1asy}. Thus, by Proposition~\ref{prop:decompu1} we have the desired decomposition~\eqref{eq:decompFinv}
	$$
		\frac{1}{F(z,u)} = g(z,u)+h(z,u) \sqrt{1-z/\rho},
	$$
	where $g(z,u)$ and $h(z,u)$ are analytic functions. In particular, the asymptotic expansion reads as follows
		\begin{align*}
			\frac{1}{F(z,u)} 
				&= \kappa \left( C(1-z/\rho) - (u-1)\sqrt{1-z/\rho} \right) \\
				& \qquad + \LandauO\left( (1-z/\rho)^{3/2}\right) + \LandauO\left( (u-1)(1-z/\rho)^{1/2} \right),
		\end{align*}
		where $\kappa$ is a non-zero constant.
		We immediately see that $g(\rho,1) = h(\rho,1) = g_u(\rho,1) = g_{uu}(\rho,1) = 0$, and that $g_z(\rho,1) = -\kappa C/\rho$, and $h_u(\rho,1) = -\kappa$. 
	Hence, Theorem \ref{theo:theo4} yields the result with the constant $\sigma = \sqrt{2}\frac{h_u(\rho,1)}{\rho g_z(\rho,1)} = \sqrt{\frac{P''(1)}{P(1)}}$.
	
	Finally, in the case $\delta >0$ the same reasoning as in \cite{bafl02} gives the result, as the perturbation by $M_{<0}(z)$ does not pose any problems. Yet, an alternative proof can be given via the perturbed supercritical sequence scheme \cite{BanderierWallner2017CatastrophesFull}.
\end{proof}

As in the previous section, the results simplify considerably for Motzkin paths; see Table~\ref{tab:compretsign}. 
In the next section we treat a less classical parameter in lattice path theory: the number of sign changes. 
The analysis becomes even more involved, which is why we deal only with Motzkin paths.

\subsection{Sign changes of Motzkin walks}
\label{sec:sign}

In the third application we deal with (weighted) Motzkin walks. Their step polynomial is given by
\begin{align*}
	P(u) = \frac{p_{-1}}{u} + p_0 + p_{1}u.
\end{align*}

We say that nodes which are strictly above the $x$-axis have a \emph{positive sign} denoted by ``$+$'', whereas nodes which are strictly below the $x$-axis have a \emph{negative sign} denoted by ``$-$'', and nodes on the $x$-axis are \emph{neutral} denoted by ``$0$''. 
This notion gives for every walk $\walksym = (\walksym_0,\dots,\walksym_n)$ a sequence of signs. In such a sequence a \emph{sign change} is defined by either the pattern~$+(0)-$ or~$-(0)+$, where~$(0)$ denotes a non-empty sequence of zeros; see Figure \ref{fig:signchanges}.

\begin{figure}[ht]
	\begin{center}	
		\includegraphics[width=0.7\textwidth]{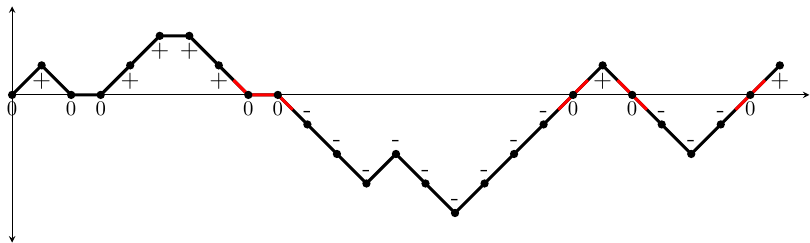}
		\caption{A Motzkin walk with $7$ returns to zero and $4$ sign changes. The positive, neutral, or negative signs of the walks are indicated by $+,0$, or $-$, respectively.}
		\label{fig:signchanges}
	\end{center}
\end{figure}

The main observation in this context is the non-emptiness of the sequence of zeros.
Geometrically this means that a walk has to touch the $x$-axis when passing through it.
Thus, we can count the number of sign changes by counting the number of maximal parts above and below the $x$-axis. 
The idea is to decompose a walk into an alternating sequence of non-negative and non-positive excursions ending with a non-negative or non-positive meander. 

We define the bivariate generating function 
$
	B(z,u) = \sum_{n,k \geq 0} b_{nk}z^n u^k,
$
where $b_{nk}$ denotes the number of bridges of size $n$ having $k$ sign changes. Furthermore, we define 
\vspace{-2mm}
\begin{align*}
	C(z) = \frac{1}{1-p_0 z},
\end{align*}
as the generating function of \emph{chains}, which are walks constructed solely from horizontal steps. Then, the generating function of (non-negative) excursions starting with a $+1$ step is 
\vspace{-1mm}
\begin{align*}
	E_1(z) &= \frac{E(z)}{C(z)}-1,
\end{align*}
since we need to exclude all excursions which start with a chain or are a chain. 
Obviously, this is also the generating function for non-positive excursions starting with a $-1$ step.

\begin{theorem}
	\label{theo:signbridgedecomp}
	The bivariate generating function of bridges 
	(where $z$ marks the length, and $u$ marks the number of sign changes) 
	is given by
	\begin{align}
		\label{eq:signBzu}
		B(z,u) &= C(z) \left( 1 + \frac{2 E_1(z)}{1-uE_1(z)} \right).
	\end{align}
\end{theorem}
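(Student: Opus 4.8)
The plan is to establish \eqref{eq:signBzu} by a direct, sign-sensitive decomposition of bridges, from which $B(z,u)$ can simply be read off; the only ingredients needed are the identity $E_1 = E/C - 1$ recorded just above the statement and the summation of one geometric series.

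First I would set up a canonical decomposition of a bridge $\omega$. Peel off its maximal initial run of flat (height-$0$) steps; call it $F_0$, with generating function $C(z)$, possibly empty. If $\omega$ is not yet exhausted, the walk then stands at altitude $0$ and its next step is a $\pm1$ jump, fixing a sign phase. Peel off the maximal block $P_1$ that preserves this phase: the block running until the first time the walk is again at altitude $0$ \emph{and} takes a $\pm1$ jump of the \emph{opposite} sign (or until $\omega$ ends). Then $P_1$ is a constant-sign excursion — it returns to $0$ — that starts with a $\pm1$ jump; it may contain internal flat steps and internal returns to the axis, and may carry trailing flat steps. Iterating gives $\omega = F_0 \cdot P_1 \cdot P_2 \cdots P_m$, $m \geq 0$, with the signs of $P_1, \dots, P_m$ strictly alternating, and this map is inverted by concatenation, hence bijective. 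The crucial point is that every flat step lies either in $F_0$ or inside some $P_i$, and is never left ``between'' two pieces; this is exactly what forces uniqueness. By construction each $P_i$ is a positive excursion beginning with a $+1$ jump, of generating function $E_1(z)$, or its mirror, a negative excursion beginning with a $-1$ jump, which has the same generating function $E_1(z)$ via the reflection bijection of Lemma~\ref{lem:posnegexc}.

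Next I would verify the sign-change bookkeeping: a bridge decomposed as above has exactly $m-1$ sign changes when $m \geq 1$, and none when $m = 0$. Inside a single $P_i$ there is no sign change, since $P_i$ never switches sides — its interior nodes all carry one sign or are $0$. At the junction between $P_i$ (sign $s$) and $P_{i+1}$ (sign $-s$) the walk sits at altitude $0$, possibly after a stretch of flat steps forming the tail of $P_i$, and then crosses to the other side, producing exactly one occurrence of the pattern $+(0)-$ or $-(0)+$; conversely, every sign change of $\omega$ occurs at such a junction. Since the signs alternate, all $m-1$ junctions are sign changes. Assembling the generating function is then immediate: $F_0$ contributes $C(z)$; the case $m=0$ contributes the lone term $C(z)$; for $m \geq 1$ the sign of $P_1$ is free (factor $2E_1(z)$), whereupon $P_2, \dots, P_m$ are sign-forced (factor $E_1(z)^{m-1}$) and carry $m-1$ sign changes (factor $u^{m-1}$), so
\[
	B(z,u) = C(z)\left( 1 + \sum_{m \geq 1} 2 E_1(z)\, \bigl(u E_1(z)\bigr)^{m-1} \right) = C(z)\left( 1 + \frac{2 E_1(z)}{1 - u E_1(z)} \right),
\]
which is \eqref{eq:signBzu}.

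I expect the main obstacle to be pinning down the canonicity of this decomposition and its compatibility with the combinatorial meaning of a sign change: one must argue that the maximal initial flat run, and then each successive constant-sign block \emph{together with its trailing flat steps}, is uniquely determined, so that flat steps are never ambiguously placeable between pieces, while matching a sign change to the pattern $+(0)-$ or $-(0)+$ in the sign sequence. Everything after that is the routine geometric-series summation above. (A slightly more computational alternative decomposes a bridge into its non-flat arches — each a positive or negative arch of generating function $\alpha(z) = p_1 p_{-1} z^2 E(z)$ — interspersed with arbitrary flat runs in the $m+1$ gaps of a length-$m$ arch sequence, evaluates $\sum_{s \in \{+,-\}^{m}} u^{\#\{i : s_i \ne s_{i+1}\}} = 2(1+u)^{m-1}$ for $m \geq 1$, and reduces the resulting expression $C(z)\bigl( 1 + \tfrac{2 C(z)\alpha(z)}{1 - (1+u)C(z)\alpha(z)} \bigr)$ to the closed form above via the identity $C\alpha = E_1/(1+E_1)$, which follows from the first-return equation $E = 1 + p_0 z E + p_1 p_{-1} z^2 E^2$.)
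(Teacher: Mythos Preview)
Your proposal is correct and follows essentially the same decomposition as the paper's proof. The paper's argument is a two-sentence sketch: a bridge is either a chain or an alternating sequence of positive and negative excursions in which all but the first are required to start with a non-zero jump, so that the first piece is counted by $E(z)-C(z)=C(z)E_1(z)$ and each subsequent one by $E_1(z)$. Your version is the same decomposition with the initial flat run $F_0$ factored out explicitly rather than absorbed into the first excursion, and with the uniqueness and sign-change bookkeeping spelled out; the alternative arch-based derivation you sketch at the end is a genuine second route, but the main line coincides with the paper's.
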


\begin{proof}
	A bridge is either a chain, which has zero sign changes, or an alternating sequence of non-negative and non-positive excursions, starting with either of them. We decompose it uniquely into such excursions, by requiring that all except the first one start with a non-zero step. Therefore the first excursion is counted by $E(z)-C(z)$, whereas all others are counted by $E_1(z)$. The factor $2$ appears as the initial excursion could be non-negative or non-positive. The decomposition is shown in Figure~\ref{fig:signdecomp}.
\end{proof}

\begin{figure}[ht]
	\begin{center}	
		\includegraphics[width=0.8\textwidth]{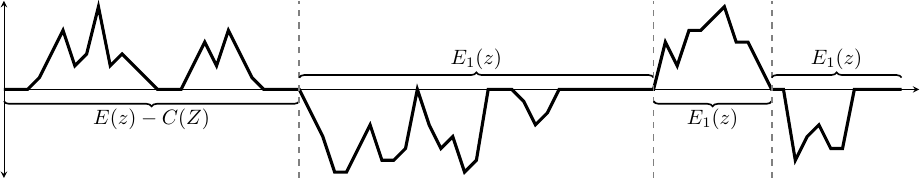}
		\caption{A bridge is an alternating sequence of non-negative and non-positive excursions. Here, it 
		starts with a non-negative excursion, followed by excursions starting with a non-zero step.}
		\label{fig:signdecomp}
	\end{center}
\end{figure}

We start our analysis by locating the dominant singularities of $B(z,u)$. 
First, we state some inherent structural results of the model which follow from direct computations. 

\begin{lemma}
	\label{lem:signMotzkinTauPtau}
	The structural constant $\tau$ which is the unique positive root of $P'(u)=0$ is $\tau = \sqrt{\frac{p_{-1}}{p_1}}$. 
	The structural radius is $\rho = \frac{1}{P(\tau)} = \frac{1}{p_0 + 2 \sqrt{p_{-1}p_1}}$.
\end{lemma}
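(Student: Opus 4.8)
The statement is a direct computation from the definitions, using the Banderier--Flajolet lemma quoted above to supply existence and uniqueness of $\tau$. First I would differentiate the jump polynomial: from $P(u) = p_{-1} u^{-1} + p_0 + p_1 u$ one gets $P'(u) = -p_{-1} u^{-2} + p_1$, and setting $P'(u)=0$ yields $u^2 = p_{-1}/p_1$, hence the unique positive solution $u = \sqrt{p_{-1}/p_1}$ (the negative root is discarded since $\tau>0$ is required, and $p_{-1},p_1>0$ makes this a well-defined positive real). That this is \emph{the} structural constant follows from the cited lemma; alternatively one checks $P''(u) = 2p_{-1}u^{-3}>0$ for $u>0$, so $P'$ is strictly increasing on $(0,\infty)$ and the root is unique. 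Thus $\tau = \sqrt{p_{-1}/p_1}$.

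Next I would substitute $\tau$ into $P$: since $p_{-1}/\tau = p_{-1}\sqrt{p_1/p_{-1}} = \sqrt{p_{-1}p_1}$ and $p_1\tau = p_1\sqrt{p_{-1}/p_1} = \sqrt{p_{-1}p_1}$, we obtain $P(\tau) = \sqrt{p_{-1}p_1} + p_0 + \sqrt{p_{-1}p_1} = p_0 + 2\sqrt{p_{-1}p_1}$. By the definition of the structural radius, $\rho = 1/P(\tau) = \frac{1}{p_0 + 2\sqrt{p_{-1}p_1}}$, which is the claimed formula.

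There is no real obstacle here; the only points requiring a word of care are the sign choice for the square root and the use of $p_{-1},p_1>0$ to guarantee that $\tau$ is a genuine positive real, both of which are immediate.
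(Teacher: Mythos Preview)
Your proposal is correct and is exactly the direct computation the paper alludes to; the paper does not spell out a proof but simply notes that the result ``follow[s] from direct computations.'' Your argument (differentiate, solve $P'(u)=0$ for the unique positive root, then evaluate $P(\tau)$) is precisely that computation, with the convexity check $P''(u)>0$ being a nice extra justification of uniqueness.
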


Let $X_n$ be the random variable for the number of sign changes of a random bridge of length~$n$. 
Thus,
\[
	\PR(X_n = k) = \frac{[u^k z^n] B(z,u)}{[z^n] B(z,1)}. 
\]

\begin{theorem}[Limit law for sign changes of bridges]
	\label{theo:signbridges}
	Let $X_n$ denote the number of sign changes of a Motzkin bridge of length $n$. Then, for $n \to \infty$ the normalized random variable has a Rayleigh %
		limit distribution
	\begin{align*}
		\frac{X_n}{\sqrt{n}} \stackrel{d}{\to} \Rc\left(\sigma\right) \qquad \text{ and } \qquad
		\sigma = \frac{\tau}{2} \sqrt{\frac{P''(\tau)}{P(\tau)}},
	\end{align*}
	where $\tau = \sqrt{\frac{p_{-1}}{p_1}}$ and $\Rc(\sigma)$ has the density $\frac{x}{\sigma^2} \exp\left(-\frac{x^2}{2\sigma^2}\right)$ for $x \geq 0$. 
\end{theorem}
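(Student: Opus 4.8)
The plan is to recognize $B(z,u)$ as a critical sequence-type schema and to invoke the Rayleigh limit theorem of Drmota and Soria \cite[Theorem~2]{DrSo97}. First I would rewrite~\eqref{eq:signBzu} as
\begin{align*}
	B(z,u)=C(z)\,\frac{1+(2-u)E_1(z)}{1-uE_1(z)},
	\qquad\text{equivalently}\qquad
	\frac{1}{B(z,u)}=(1-p_0z)\,\frac{1-uE_1(z)}{1+(2-u)E_1(z)},
\end{align*}
using $1/C(z)=1-p_0z$. The structural key is the identity $E_1(\rho)=1$: since $c=d=1$ here, Table~\ref{tab:dirTypes} gives $E(z)=u_1(z)/(p_{-1}z)$, so with $u_1(\rho)=\tau$ (see~\eqref{eq:u1asy}), the relation $1-p_0\rho=2p_1\tau\rho$ coming from $1/\rho=P(\tau)=p_0+2p_1\tau$ (Lemma~\ref{lem:signMotzkinTauPtau}), and $\tau^2=p_{-1}/p_1$, one computes $E_1(\rho)=(1-p_0\rho)\tau/(p_{-1}\rho)-1=2p_1\tau^2/p_{-1}-1=1$. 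Hence at $z=\rho$ the factor $1-uE_1(z)$ vanishes precisely at $u=1$, the confluence of singularities that produces a non-classical law.

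Next I would establish the local representation demanded by Hypothesis~[H]. By Proposition~\ref{prop:decompu1} applied to the principal small branch, $u_1(z)=a_0(z)+b_0(z)\sqrt{1-z/\rho}$ near $\rho$ in $\neighrayrho$ with $a_0(\rho)=\tau$, $b_0(\rho)=-C$, $C=\sqrt{2P(\tau)/P''(\tau)}$; since $C(z)=1/(1-p_0z)$ is analytic and nonzero for $|z|\le\rho$ (as $\rho<1/p_0$), the function $E_1(z)=(1-p_0z)E(z)-1$ inherits a decomposition $E_1(z)=a(z)+b(z)\sqrt{1-z/\rho}$ with $a(\rho)=1$ and $b(\rho)=-2C/\tau$. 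Substituting this into $1/B(z,u)$ and rationalizing with respect to $\sqrt{1-z/\rho}$ produces the required form
\begin{align*}
	\frac{1}{B(z,u)}=g(z,u)+h(z,u)\sqrt{1-z/\rho},
\end{align*}
with $g,h$ analytic in a neighbourhood of $(\rho,1)$, since the common denominator $(1+(2-u)a(z))^2-(2-u)^2b(z)^2(1-z/\rho)$ equals $4$ at $(\rho,1)$; a short simplification (the relevant cross term collapsing to the constant $2$) gives $h(z,u)=-2(1-p_0z)b(z)\big/\bigl((1+(2-u)a(z))^2-(2-u)^2b(z)^2(1-z/\rho)\bigr)$. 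Evaluating, $g(\rho,u)=(1-p_0\rho)\frac{1-u}{3-u}$, so $g(\rho,1)=0$ and $g_u(\rho,1)=-\tfrac12(1-p_0\rho)\neq0$, while $h(\rho,1)=(1-p_0\rho)C/\tau>0$; in particular $B(z,1)$ has a genuine square-root singularity, so the pertinent hypothesis is the full~[H], not merely~[H'].

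It remains to check the global part of~[H]: that $z=\rho$ is the unique dominant singularity and that $B(z,u)$ continues analytically to a region $|z|<\rho+\delta$, $|u|<1+\delta$, $|u-1|>\varepsilon/2$. This is where aperiodicity of the Motzkin step set enters — since $p_{-1},p_0,p_1>0$ the walk is aperiodic, so $u_1(z)$ continues past $\rho$ on the slit plane exactly as exploited in Lemma~\ref{lem:archdecomp}, hence so do $E(z)$, $E_1(z)$ and, for $u$ bounded away from $1$ (where $1-uE_1(z)$ stays bounded away from $0$ near $\rho$), the function $B(z,u)$. With~[H] verified and the Rayleigh condition $g_u(\rho,1)\neq0$ in hand, \cite[Theorem~2]{DrSo97} yields $X_n/\sqrt n\stackrel{d}{\to}\Rc(\sigma)$ with $\sigma=\sqrt2\,|g_u(\rho,1)|/h(\rho,1)$; inserting the values above and $C^2=2P(\tau)/P''(\tau)$ gives $\sigma=\tau/(\sqrt2\,C)=\tfrac{\tau}{2}\sqrt{P''(\tau)/P(\tau)}$, as claimed (and as a consistency check the same local data give $\E[X_n]\sim\frac{-g_u(\rho,1)}{h(\rho,1)}\sqrt{\pi n}=\sigma\sqrt{\pi/2}\,\sqrt n$, the correct Rayleigh mean). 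The main obstacle is not the local expansion, which is mechanical once $E_1(\rho)=1$ is spotted, but the honest verification of~[H] — the analytic continuation of $B(z,u)$ beyond $z=\rho$ and the absence of a rival singularity on $|z|=\rho$ arising from $1-uE_1(z)=0$ for $u$ near $1$ — together with confirming that the regime is pure Rayleigh rather than a Gaussian--Rayleigh convolution, which is ensured by $\E[X_n]=\Theta(\sqrt n)$ rather than $\Theta(n)$.
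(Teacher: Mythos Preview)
Your proposal is correct and follows essentially the same route as the paper's proof sketch: both establish the local representation of $1/B(z,u)$ via the decomposition $E_1(z)=a(z)+b(z)\sqrt{1-z/\rho}$ from Proposition~\ref{prop:decompu1} (with $a(\rho)=1$, $b(\rho)=-2C/\tau$), verify Hypothesis~[H], and read off $g(\rho,1)=0$, $g_u(\rho,1)\neq0$, $h(\rho,1)>0$ to invoke the Rayleigh limit theorem of Drmota--Soria; your values $g_u(\rho,1)=-\tfrac12(1-p_0\rho)=-\tau\rho p_1$ and $h(\rho,1)=(1-p_0\rho)C/\tau=2C\rho p_1$ coincide with the paper's. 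The only slip is the citation: the Rayleigh case is \cite[Theorem~1]{DrSo97}, not Theorem~2 (which is the Gaussian case); the paper cites Theorem~1. Your explicit verification that $E_1(\rho)=1$ and your discussion of the global analytic continuation are in fact more detailed than the paper's sketch, which simply asserts $a_E(\rho)=1$ and does not address the global part of~[H].
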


\begin{proof}
	We will apply the first limit theorem of Drmota and Soria, \cite[Theorem~1]{DrSo97}. (The conditions of Hypothesis [H] are the same as for Hypothesis [H'] with the additional requirement that $h(\rho,1) > 0$.) 
	
	Let us first analyze $B(z,1)$. Its dominant singularity is at $\rho$, as $1/p_0 > \rho = 1/(p_0 + 2 \sqrt{p_{-1}p_1})$; compare with Lemma~\ref{lem:signMotzkinTauPtau}.	
	Next we determine the decomposition at $z=\rho$ and $u=1$. From Proposition~\ref{prop:decompu1} it follows that $E_1(z)$ has a local representation of the kind
	\begin{align*}
		E_1(z) &= a_E(z) + b_E(z) \sqrt{1-z/\rho},
	\end{align*}
	where $a_E(z)$ and $b_E(z)$ are analytic functions around $z=\rho$ with $a_E(\rho)=1$ and $b_E(\rho)=-\frac{2}{\tau}\sqrt{\frac{2 P(\tau)}{P''(\tau)}}$. 
	From \eqref{eq:signBzu} we see that 
	\begin{align*}
		B(z,u) &= C(z) F(E_1(z),u), & \text{ where } &&
		F(y,u) &= 1+\frac{2y}{1-uy}.
	\end{align*}
	We can use the Taylor series expansion of
	\begin{align*}
		F(y,u)^{-1} &= \sum_{n, k \geq 0} f_{n k} (y-1)^n (u-1)^k,
	\end{align*}
	with $f_{00} = 0$ and $f_{10}=f_{01}=-1/2$ to show the desired decomposition:
	\begin{align*}
		B(z,u)^{-1} &= C(z)^{-1} F(E_1(z),u)^{-1} = g(z,u) + h(z,u) \sqrt{1-z/\rho}.
	\end{align*}	
	We have $g(\rho,1) = f_{00} = 0$, $h(\rho,1) = (1-\rho p_0)f_{10}b_E(\rho) 
	> 0$ and $g_u(\rho,1) = (1-\rho p_0) f_{01} 
	< 0$. 
	Finally, \cite[Theorem~1]{DrSo97} yields the result.
\end{proof}

Finally, we consider sign changes of walks. 
Since we want to count the number of sign changes we need to know whether the last non-zero node in a bridge has a positive or negative sign. 
Let \emph{positive bridges} be bridges whose last non-zero node has a  positive sign, and \emph{negative bridges} be bridges whose last non-zero node has a negative sign. Their generating functions are denoted by $B_+(z,u)$ and $B_-(z,u)$, respectively. Figure~\ref{fig:signdecomp} shows a negative bridge.

\begin{lemma}
	\label{lem:B+}
	The number of positive and negative bridges is the same and given by
	\begin{align*}
		B_+(z,u) = \frac{B(z,u)-C(z)}{2} = \frac{E(z)-C(z)}{1-uE_1(z)}.
	\end{align*}
\end{lemma}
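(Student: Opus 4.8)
The plan is to establish the two claimed equalities one after the other: first the combinatorial identity $B_+(z,u)=\tfrac12\bigl(B(z,u)-C(z)\bigr)$, which in particular encodes the equality of the numbers of positive and negative bridges, and then the closed form, which will drop out by substituting the formula of Theorem~\ref{theo:signbridgedecomp} and using the relation $E_1(z)=E(z)/C(z)-1$.

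For the first equality I would argue by a trichotomy on bridges. A bridge is either a chain (every jump of height $0$), in which case it has no node of non-zero altitude and is neither positive nor negative; or it is not a chain, in which case it possesses at least one node of non-zero altitude, hence a well-defined \emph{last} such node, whose sign is either $+$ or $-$. These three disjoint classes are counted, with the same sign-change marking as in $B(z,u)$, by $C(z)$, $B_+(z,u)$ and $B_-(z,u)$ respectively, so $B(z,u)=C(z)+B_+(z,u)+B_-(z,u)$. Next, as in the proof of Lemma~\ref{lem:posnegexc}, mirroring a walk across the $x$-axis is an involution that swaps the signs $+$ and $-$ and fixes $0$; it therefore turns each pattern $+(0)-$ into a pattern $-(0)+$ and conversely, preserving the length and the number of sign changes while exchanging positive and negative bridges. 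Hence $B_+(z,u)=B_-(z,u)$, and combining with the previous identity gives $B_+(z,u)=\tfrac12\bigl(B(z,u)-C(z)\bigr)$.

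For the closed form, substitute \eqref{eq:signBzu}: one gets $\tfrac12\bigl(B(z,u)-C(z)\bigr)=\tfrac12\,C(z)\cdot\frac{2E_1(z)}{1-uE_1(z)}=\frac{C(z)E_1(z)}{1-uE_1(z)}$, and since $C(z)E_1(z)=C(z)\bigl(E(z)/C(z)-1\bigr)=E(z)-C(z)$, this equals $\frac{E(z)-C(z)}{1-uE_1(z)}$, as claimed. The same formula can also be read off directly from the first-passage decomposition of Theorem~\ref{theo:signbridgedecomp}: a positive bridge with $k$ sign changes is an alternating run of $k+1$ excursions whose last one is positive, namely a non-chain first excursion contributing $E(z)-C(z)$ followed by $k$ further excursions, each starting with a non-zero jump and each responsible for one sign change, contributing $uE_1(z)$ apiece; the signs of all blocks are then forced by the requirement that the last be positive, which is why no factor $2$ appears here, unlike in \eqref{eq:signBzu}. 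Summing the geometric series over $k\geq 0$ recovers $\frac{E(z)-C(z)}{1-uE_1(z)}$.

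Every step is a direct unfolding of the relevant definitions, so there is no genuine obstacle. The only points that deserve a line of care are: (i) checking that ``is a chain'' is precisely the obstruction to a bridge having a last non-zero-signed node, so that $B=C+B_++B_-$ really is a partition; and (ii) confirming that the mirror map respects the sign-change patterns, which it does because it merely transposes $+$ and $-$ and leaves $0$ alone. Both verifications are immediate.
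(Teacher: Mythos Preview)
Your proof is correct and follows essentially the same approach as the paper: the paper's proof also invokes the mirroring bijection of Lemma~\ref{lem:posnegexc} (phrased there as ``mapping all positive excursions to negative ones, and vice versa'') to obtain $B_+=B_-$, leaving the partition $B=C+B_++B_-$ and the algebraic derivation of the second equality implicit. You spell out both of these steps explicitly, which is a welcome addition but not a different argument.
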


\begin{proof}
	A bridge is a sequence of excursions, as shown in Figure~\ref{fig:signdecomp}. Therefore, mapping all positive excursions to negative ones, and vice versa, gives a bijection between positive and negative bridges. 
	For the formula, note that a bridge is either a positive bridge, a negative bridge, or a chain.
\end{proof}

In the following result we rediscover the tail $T(z) = W(z)/B(z)$, introduced in Section~\ref{sec:returns}. 
Here, $T(z)$ is the union of negative and positive meanders because we are dealing with Motzkin paths.
Then, $T(z)-1$ is the union of non-empty negative and positive meanders.

\begin{proposition}
	The bivariate generating function of walks $W(z,u) = \sum_{n,k \geq 0} w_{nk} z^n u^k$, where $w_{nk}$ is the number of all walks of length $n$ with $k$ sign changes, is given by
	\begin{align}
		\label{eq:Fzusignchanges}
		W(z,u) &= B(z,u) \frac{W(z)}{B(z)} + B_+(z,u) \left(\frac{W(z)}{B(z)} - 1 \right) (u-1),
	\end{align}
	where $W(z) = \frac{1}{1-zP(1)}$ is the generating function of walks.
\end{proposition}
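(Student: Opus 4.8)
The plan is to reuse the bridge--tail decomposition already used for returns to zero (Figure~\ref{fig:retzerotail}): every walk factors uniquely as a maximal initial bridge --- taken up to its last visit to the $x$-axis --- followed by a \emph{tail} that never again reaches altitude $0$, so that $W(z) = B(z)\,T(z)$ with $T(z) = W(z)/B(z)$ as recorded in~\eqref{eq:returnsBGF}. Since a sign change is a pattern $+(0)-$ or $-(0)+$ with a \emph{non-empty} run of zeros in between, every sign change of a walk is either internal to the initial bridge or straddles the bridge--tail junction. The whole argument therefore reduces to bookkeeping at that junction, for which one only needs to know the sign with which the initial bridge ends.

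First I would record the feature special to the Motzkin step set: a non-empty tail cannot hop across the $x$-axis, hence it stays strictly positive or strictly negative; in particular a tail contains no sign change, and by the mirror symmetry (cf.\ Lemma~\ref{lem:posnegexc}) strictly positive and strictly negative tails are equinumerous, so $T(z) = 1 + 2T_{>0}(z)$ where $T_{>0}(z)$ counts strictly positive tails and thus $2T_{>0}(z) = W(z)/B(z) - 1$. Next I would note that the node at which the initial bridge ends is neutral, so between the bridge's last non-zero node (if the bridge is not a chain) and the first non-zero node of an attached non-empty tail every node is zero, and at least one such zero node occurs --- the terminal node of the bridge. Hence attaching a non-empty tail produces exactly one extra sign change precisely when the bridge ends with a non-zero sign opposite to the tail's sign, and none otherwise; an empty tail or a chain bridge never produces one. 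Splitting bridges into chains ($C(z)$, neutral), positive bridges, and negative bridges --- the last two equinumerous and counted by $B_+(z,u)$ by Lemma~\ref{lem:B+} --- and combining each with an empty, a strictly positive, or a strictly negative tail, with a factor $u$ inserted exactly at the opposite-sign junctions (the $u$-marking of $B(z,u)$ and $B_+(z,u)$ already accounting for the sign changes inside the bridge), yields
\begin{align*}
	W(z,u) &= C(z)\,T(z) + 2\,B_+(z,u)\bigl(1 + (1+u)\,T_{>0}(z)\bigr).
\end{align*}

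It then remains to simplify. Substituting $C(z) = B(z,u) - 2B_+(z,u)$ (Lemma~\ref{lem:B+}), $2T_{>0}(z) = W(z)/B(z) - 1$, and $T(z) = W(z)/B(z)$, the coefficient of $B_+(z,u)$ collapses to $(u-1)\bigl(W(z)/B(z) - 1\bigr)$, which is exactly~\eqref{eq:Fzusignchanges}. I expect the main obstacle to be the junction analysis rather than the algebra: one must verify that the separating block of zeros is genuinely non-empty --- it always contains the terminal zero of the bridge --- so that a sign flip there really is a sign change; that same-sign junctions and the degenerate cases (empty tail, chain bridge) contribute nothing; and that the tail has a well-defined sign at all, which is where the concrete Motzkin step set is used. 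Once these points are settled, the rest is a short computation.
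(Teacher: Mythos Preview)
Your proof is correct and follows the same bridge--tail decomposition and junction analysis as the paper's (very terse) argument; the paper simply states that an extra sign change occurs exactly when a non-chain bridge is followed by a non-empty tail of the opposite sign and then invokes Lemma~\ref{lem:B+}, whereas you spell out the intermediate formula $W(z,u)=C(z)T(z)+2B_+(z,u)\bigl(1+(1+u)T_{>0}(z)\bigr)$ and carry out the simplification explicitly. The algebra checks out, and your remarks on why the Motzkin step set is needed (so that tails have a well-defined sign) and on the non-emptiness of the separating zero-block are exactly the points the paper leaves implicit.
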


\begin{proof}
	Combinatorially, a walk is either a bridge or a bridge concatenated by a positive or negative meander, i.e., a meander that does not return to the $x$-axis again. 
	In the second case an additional sign change happens if the last non-zero node in the bridge has a negative sign and continues with a non-empty positive meander, or vice versa. 
	By Lemma~\ref{lem:B+} $B_+(z,u)=B_-(z,u)$ and the desired form follows.
\end{proof}

The last ingredient for the proof of Theorem~\ref{theo:mainSignChanges} is the following (technical) lemma on the small branch~$u_1(z)$. 
It can also be used to simplify the results on the height from Theorem~\ref{theo:height} in the case of Motzkin walks because $u_1(z)v_1(z) = \frac{p_{-1}}{p_1}$; see Table~\ref{tab:compretsign}.
Note that these results generally do not apply to other models.

\begin{lemma}
	\label{lem:u1rho1}
	Let $P(u) = p_{-1}u^{-1} + p_0 + p_1 u$. Let $u_1(z)$ be the small branch of the kernel equation $1-zP(u)=0$ with $\lim_{z \to 0} u_1(z) = 0$, and define $\rho_1 := 1/P(1)$. Then 
	\begin{align*}
		u_1\left(\rho_1\right) &=
			\begin{cases}
				\mathrlap{1,}
				\hphantom{\tau^2 \left(\frac{P(1)}{P'(1)}\right)^3 \left( P(1)P''(1)-2P'(1)^2 \right),} & \text{ for } \delta < 0, \\
				\tau^2, & \text{ for } \delta >0,
			\end{cases}
		\\
		u_1'\left(\rho_1\right) &=
			\begin{cases}
				\mathrlap{-\frac{P(1)^2}{P'(1)},}
				\hphantom{\tau^2 \left(\frac{P(1)}{P'(1)}\right)^3 \left( P(1)P''(1)-2P'(1)^2 \right),} & \text{ for } \delta < 0, \\
				\tau^2 \frac{P(1)^2}{P'(1)}, & \text{ for } \delta >0,
			\end{cases}
		\\
		u_1''\left(\rho_1\right) &=
			\begin{cases}
				-\left(\frac{P(1)}{P'(1)}\right)^3 \left( P(1)P''(1)-2P'(1)^2 \right), & \text{ for } \delta < 0, \\
				 \tau^2 \left(\frac{P(1)}{P'(1)}\right)^3 \left( P(1)P''(1)-2P'(1)^2 \right), & \text{ for } \delta >0.
			\end{cases}
	\end{align*}	
\end{lemma}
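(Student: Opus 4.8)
The plan is to clear the pole in the kernel equation so that it becomes a quadratic, read off $u_1(\rho_1)$ from Vieta's relations, and then obtain the two derivatives by implicit differentiation of $1-zP(u_1(z))=0$, using the reflection symmetry $P(u)=P(\tau^2/u)$ to reduce the $\delta>0$ case to quantities at $u=1$.

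First I would multiply the kernel equation by $u$: since $P(u)=p_{-1}u^{-1}+p_0+p_1u$, the relation $1-zP(u)=0$ is, for $z\neq0$, equivalent to the quadratic $p_1zu^2-(1-p_0z)u+p_{-1}z=0$, whose two roots are precisely the principal small branch $u_1(z)$ and the principal large branch $v_1(z)$. Vieta then gives $u_1(z)v_1(z)=p_{-1}/p_1=\tau^2$, with $\tau=\sqrt{p_{-1}/p_1}$ by Lemma~\ref{lem:signMotzkinTauPtau}. At $z=\rho_1=1/P(1)$ the value $u=1$ satisfies the kernel equation, because $1-\rho_1P(1)=0$; hence $\{u_1(\rho_1),v_1(\rho_1)\}=\{1,\tau^2\}$. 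To decide which root is the small branch I would invoke the domination property of the small branches (Section~\ref{sec:motzkin}): $|u_1(z)|\le\tau$ and $|u_1(z)|<|v_1(z)|$ for $|z|<\rho$, the latter forcing $u_1(\rho_1)<\tau$ strictly (equality $u_1(\rho_1)=\tau$ would entail $v_1(\rho_1)=\tau$ as well, via $u_1v_1=\tau^2$). Since $\delta\neq0$ gives $\tau\neq1$ and hence $\rho_1<\rho$, this applies at $\rho_1$. For $\delta<0$ one has $\tau>1$, so $u_1(\rho_1)<\tau<\tau^2$ and therefore $u_1(\rho_1)=1$; for $\delta>0$ one has $\tau<1$, so $u_1(\rho_1)<\tau<1$ and therefore $u_1(\rho_1)=\tau^2$. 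This establishes the first line of the statement.

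For the derivatives I would differentiate $1-zP(u_1(z))=0$. One differentiation yields $u_1'(z)=-P(u_1(z))/(zP'(u_1(z)))=-1/(z^2P'(u_1(z)))$, using $P(u_1(z))=1/z$, and a second yields $u_1''(z)=-2u_1'(z)/z-P''(u_1(z))(u_1'(z))^2/P'(u_1(z))$. Evaluating at $z=\rho_1$, where $1/\rho_1=P(1)$, leaves only $P'$ and $P''$ at $u_1(\rho_1)$. For $\delta<0$, where $u_1(\rho_1)=1$, the claimed expressions follow by direct substitution. For $\delta>0$, where $u_1(\rho_1)=\tau^2$, I would use the reflection symmetry $P(u)=P(\tau^2/u)$, which is immediate from $\tau^2=p_{-1}/p_1$ by a one-line substitution; differentiating this identity once and twice and setting $u=1$ expresses $P'(\tau^2)$ in terms of $P'(1)$, and $P''(\tau^2)$ in terms of $P'(1)$ and $P''(1)$, and substituting these back gives the stated values of $u_1'(\rho_1)$ and $u_1''(\rho_1)$.

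The only point I expect to require care is the algebraic bookkeeping in the $\delta>0$ case, where the powers of $\tau^2$ produced by the chain rule applied to $P(\tau^2/u)$ must be combined correctly with the powers of $P(1)$ and $P'(1)$ arising from the evaluations of $u_1'$ and $u_1''$ at $\rho_1$. No analytic subtleties arise, since $\rho_1<\rho$ whenever $\delta\neq0$, so $u_1$ is analytic at $\rho_1$ and the ordinary implicit function theorem suffices.
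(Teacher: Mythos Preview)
Your argument is correct and follows the same route as the paper's (kernel equation to locate $u_1(\rho_1)$, then implicit differentiation of $1-zP(u_1(z))=0$); where the paper invokes monotonicity of $u_1$ on $(0,\rho)$ to conclude $u_1(\rho_1)<u_1(\rho)=\tau$, you use Vieta plus the domination property, and your reflection identity $P(u)=P(\tau^2/u)$ is a tidy device the paper's terse proof does not spell out (it defers the arithmetic to a companion reference).

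One warning on the bookkeeping you already flag: differentiating the reflection identity twice at $u=1$ gives $P''(\tau^2)=(P''(1)+2P'(1))/\tau^4$, so for $\delta>0$ your method actually produces
\[
u_1''(\rho_1)=\tau^2\left(\frac{P(1)}{P'(1)}\right)^{3}\bigl(P(1)\,(P''(1)+2P'(1))-2P'(1)^2\bigr),
\]
and a direct numerical check (e.g.\ $p_{-1}=1$, $p_0=0$, $p_1=4$, where $u_1''(1/5)=1375/54$) confirms this value rather than the one printed in the lemma. Your method is sound; the discrepancy is in the stated formula for that single case, not in your argument.
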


\begin{proof}
	For $\delta \neq 0$ we know that $u_1(z)$ is regular at $\rho_1$. 
	As by~\cite{bafl02} $u_1(z)$ is monotonically increasing on the positive real axis, we have $u_1(\rho_1) < u_1(\rho) = \tau = \sqrt{p_{-1}/p_1}$. Then, from the kernel equation $1-z P(u_1(z))=0$ for all $|z|<\rho$, we get the desired result. For the second and third claim one uses the implicit derivative of the kernel equation~\eqref{eq:kerneleq} and the previous results. 
\end{proof}

The next theorem concludes this discussion. Its proof is similar to the one of Theorem~\ref{theo:mainRetZero}. 

\begin{theorem}[Limit law for sign changes]
	\label{theo:mainSignChanges}
	Let $X_n$ denote the number of sign changes of Motzkin walks of length $n$. Let $\delta=P'(1)$ be the drift. 
	\begin{enumerate} 
		\item For $\delta \neq 0$ we get convergence to a geometric distribution:
			\begin{align*}
				X_n \stackrel{d}{\to} \operatorname{Geom}\left( \lambda \right), &&& 
				\text{ with } & 
				\lambda &= 
					\begin{cases}
						\frac{p_1}{p_{-1}}, & \text{ for } \delta < 0,\\
						\frac{p_{-1}}{p_{1}}, & \text{ for } \delta > 0.
					\end{cases}
			\end{align*}
		\item For $\delta = 0$ we get convergence to a half-normal distribution:
			\begin{align*}
				\frac{X_n}{\sqrt{n}} \stackrel{d}{\to} \Hc\left(\frac{1}{2} \sqrt{\frac{P''(1)}{P(1)}}\right).
			\end{align*}
	\end{enumerate}
\end{theorem}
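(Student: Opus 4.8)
\textbf{Proof plan for Theorem~\ref{theo:mainSignChanges}.}
The plan is to mirror the structure of the proof of Theorem~\ref{theo:mainRetZero}, splitting into the regimes $\delta\neq0$ and $\delta=0$ according to whether the dominant singularity of $W(z,u)$ at $u=1$ comes from the walk factor $W(z)=1/(1-zP(1))$ at $\rho_1=1/P(1)$ or coincides with the square-root singularity $\rho$ of the small/large branches. First I would record the generating-function identity~\eqref{eq:Fzusignchanges}, namely
\begin{align*}
	W(z,u) &= B(z,u)\frac{W(z)}{B(z)} + B_+(z,u)\left(\frac{W(z)}{B(z)}-1\right)(u-1),
\end{align*}
together with the closed forms~\eqref{eq:signBzu} and Lemma~\ref{lem:B+} for $B(z,u)$ and $B_+(z,u)$, so that everything is expressed through $C(z)=1/(1-p_0z)$ and $E_1(z)=E(z)/C(z)-1$. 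As in Theorem~\ref{theo:signbridges}, Proposition~\ref{prop:decompu1} gives $E_1(z)=a_E(z)+b_E(z)\sqrt{1-z/\rho}$ with $a_E(\rho)=1$, $b_E(\rho)=-2C/\tau$, which is the only source of the square root.

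For part~(1), $\delta\neq0$, the convex minimum of $P$ forces $\tau\neq1$ and hence $\rho_1<\rho$, so the dominant singularity of $W(z,u)$ (for $u$ near $1$) is the simple pole at $z=\rho_1$, where $B(z)$, $B(z,u)$, $B_+(z,u)$ are all analytic. A single application of Theorem~VI.12 of~\cite{flaj09} then gives $[z^n]W(z,u)=c(u)\,P(1)^n+\Landauo(P(1)^n)$ with an explicit rational $c(u)$; dividing by $[z^n]W(z,1)=P(1)^n$ yields $\PR[X_n=k]\to[u^k]\big(c(u)/c(1)\big)$ for each fixed $k$, and one checks that $c(u)/c(1)$ is the PGF of $\operatorname{Geom}(\lambda)$. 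To identify $\lambda$ I would evaluate the pieces at $z=\rho_1$ using that $u_1(\rho_1)=1$ for $\delta<0$ and $u_1(\rho_1)=\tau^2=p_{-1}/p_1$ for $\delta>0$ (Lemma~\ref{lem:u1rho1}), together with $E(\rho_1)=1/(\rho_1 p_1 v_1(\rho_1))$ and $v_1(\rho_1)=1/u_1(\rho_1)$; this produces $\lambda=p_1/p_{-1}$ when $\delta<0$ and $\lambda=p_{-1}/p_1$ when $\delta>0$. The sign of $\delta$ also decides which endpoint of the bridge is the ``generic'' one, which is why $\lambda$ flips; I expect a short case check here but nothing deep.

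For part~(2), $\delta=0$, we have $\tau=1$, $\rho=\rho_1=1/P(1)$, so the pole of $W(z)$ merges with the square-root singularity of $E_1(z)$ and Theorem~\ref{theo:theo4} applies. The key computation is to verify Hypothesis~[H'] and the vanishing conditions for $1/W(z,u)$. Writing $W(z,u)^{-1}$ via~\eqref{eq:Fzusignchanges} and expanding in $(1-z/\rho)^{1/2}$ and $(u-1)$, one should obtain, for $z\to\rho$, $u\to1$,
\begin{align*}
	\frac{1}{W(z,u)} &= \kappa\Big( C_2\,(1-z/\rho) - (u-1)\sqrt{1-z/\rho}\Big) + \LandauO\big((1-z/\rho)^{3/2}\big) + \LandauO\big((u-1)(1-z/\rho)^{1/2}\big),
\end{align*}
for a nonzero constant $\kappa$ and $C_2>0$; this immediately gives $g(\rho,1)=h(\rho,1)=g_u(\rho,1)=g_{uu}(\rho,1)=0$, $g_z(\rho,1)\neq0$, $h_u(\rho,1)\neq0$, so Theorem~\ref{theo:theo4} yields $X_n/\sqrt n\to\Hc(\sigma)$ with $\sigma=\sqrt2\,h_u(\rho,1)/(\rho g_z(\rho,1))$. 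The final task is to show $\sigma=\tfrac12\sqrt{P''(1)/P(1)}$; here I would use $C=\sqrt{2P(\tau)/P''(\tau)}=\sqrt{2P(1)/P''(1)}$, $\tau=1$, $b_E(\rho)=-2C$, and the Taylor data of $B(z,u)^{-1}$ from Theorem~\ref{theo:signbridges} (the coefficients $f_{00}=0$, $f_{10}=-1/2$ of $F(y,u)^{-1}=(1-y(u-2))/(1-uy)$), and track how the extra term $B_+(z,u)(W(z)/B(z)-1)(u-1)$ contributes — since $W(z)/B(z)$ itself has a square-root singularity at $\rho$, this term is $\Landauo$ relative to the main ones but must be checked not to disturb the leading $g_z$ and $h_u$. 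The main obstacle I anticipate is exactly this bookkeeping: confirming that the walk-to-bridge correction term and the analytic prefactor $C(z)^{-1}$ do not alter the vanishing of $g_u,g_{uu}$ and do not contribute to $h_u(\rho,1)$ or $g_z(\rho,1)$ at leading order, so that the clean formula for $\sigma$ survives. The analytic-continuation clause of [H'] is routine since $C(z)$, $E(z)$ and the non-principal branches are analytic past $\rho$ for an aperiodic (here Motzkin) step set.
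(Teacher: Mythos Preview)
Your plan is correct and matches the paper's proof essentially line for line: the paper also splits on $\delta$, uses \cite[Theorem~VI.12]{flaj09} together with Lemma~\ref{lem:u1rho1} to obtain the geometric law when $\rho_1<\rho$, and for $\delta=0$ invokes Theorem~\ref{theo:theo4} after writing $W(z,u)^{-1}=g+h\sqrt{1-z/\rho}$ via the decomposition of $E_1(z)$ from Theorem~\ref{theo:signbridges} and an expansion of the tail $W(z)/B(z)$. One slip to fix in your part~(1) computation: for Motzkin paths the Vieta relation is $u_1(z)v_1(z)=p_{-1}/p_1$, not $v_1=1/u_1$, so $v_1(\rho_1)=p_{-1}/p_1$ when $\delta<0$ and $v_1(\rho_1)=1$ when $\delta>0$; with this correction the evaluation of $E(\rho_1)$ and hence of $\lambda$ goes through.
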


\begin{proof}
	Let us start with an analysis of the dominant singularity. The most important term of~\eqref{eq:Fzusignchanges} decomposes into
	\begin{align*}
		\frac{W(z)}{B(z)} &= \frac{1}{1-zP(1)} \frac{u_1(z)}{z u_1'(z)}.
	\end{align*}
	The first factor is singular at $\rho_1=1/P(1)$ while the second one is singular at $\rho=1/P(\tau) \geq \rho_1$. 
	Thus, either $W(z)$ is singular or both are simultaneously.

	In the first case, $\delta \neq 0$, we use the coefficient asymptotics for the product of a singular and an analytic function \cite[Theorem VI.12]{flaj09}. 
	This method requires the evaluation of the analytic function $B(z)$ at the singularity $z=\rho_1$. 	
	Applying Lemma~\ref{lem:u1rho1} we compute
	\begin{align}
		\label{eq:motzkinBrho1}
		B\left(\rho_1\right) &=
			\begin{cases}
				-\frac{P(1)}{\delta}, & \text{ for } \delta < 0, \\
				\frac{P(1)}{\delta}, & \text{ for } \delta >0.
			\end{cases}
	\end{align}
	Then some additional calculations show for $\delta<0$ that
	\begin{align*}
		\PR(X_n = k) &= \frac{[u^k z^n] W(z,u)}{[z^n]W(z)} = [u^k]\left( - \frac{\delta}{p_{-1}}\right) \frac{1}{1-u\frac{p_1}{p_{-1}}} + \Landauo(1).
	\end{align*}
	This is a geometric distribution with parameter $\lambda = \frac{p_1}{p_{-1}}$. For $\delta>0$ the analogous result holds.

	In the second case, $\delta=0$, we additionally have $\tau=1$ and $\rho=\rho_1$. Then, we can apply Theorem~\ref{theo:theo4}.  
	A reasoning along the lines of Theorem~\ref{theo:signbridges} shows that
	$
		1/W(z,u) = g(z,u)+h(z,u) \sqrt{1-z/\rho},
	$
	where $g(z,u)$ and $h(z,u)$ are analytic functions. We omit the tedious calculations and directly derive the asymptotic form for $z \to \rho$.	
	For the tail we get by~\eqref{eq:signBtau1} the expansion 
	\begin{align*}
		\frac{W(z)}{B(z)} &= \frac{2}{C \sqrt{1 - z/\rho}} + \LandauO(1), \qquad C = \sqrt{2 \frac{P(1)}{P''(1)}},
	\end{align*}
	for $z \to \rho$. Thus, we have
	\begin{align*}
		\frac{1}{W(z,u)} &= \frac{2 C \rho p_{-1}}{\tau^2 (u-3)(u+1)} 
			\left( 
				\frac{4C}{\tau (u-3)} \left(1-\frac{z}{\rho}\right) + (u-1) \sqrt{1 - \frac{z}{\rho}}
			\right)   \\
			& \quad+ \LandauO\left( \left(1-\frac{z}{\rho} \right)^2\right) + \LandauO\left( \left(1-\frac{z}{\rho} \right) (1-u) \right), 
	\end{align*}
	for $|u-1| < \varepsilon$, $|z-\rho|< \varepsilon$ and $\arg(z-\rho) \neq 0$, with $g(\rho,1)=h(\rho,1)=g_u(\rho,1)=g_{uu}(\rho,1)=0$; and $g_z(\rho,1)=-\frac{C^2 p_{-1}}{\tau^3}$ and $h_u(\rho,1) = - \frac{C \rho p_{-1}}{2 \tau^2}$ together with the needed analytic continuation. 
	Hence, Theorem~\ref{theo:theo4} yields the result with the constant 
	$\sigma 
	  = \frac{1}{2} \sqrt{\frac{P''(1)}{P(1)}}$.	
\end{proof}	

For the problem of sign changes with a general step set, the same ideas are applicable, yet the computations are more tedious. The only new ingredient one would need is a variation of the kernel method, using both small and large roots presented in~\cite{BanderierWallner18Gascom}.

\subsection{Banach's matchbox problem} 
\label{sec:Banach}

As a last example let us consider Banach's matchbox problem~\cite[Chapter~VI.8(a)]{fell68}.
Consider a mathematician who always carries one matchbox in his right pocket and one matchbox in his left pocket. 
Each one contains exactly $n$ matches.
When he needs a match, he selects one pocket at random. 
At the moment when the first box is empty, how many matches remain in the other box?

We can use lattice paths to model this problem.
The parameter we will use is the difference between the fuller and the emptier box.
Thus, at the beginning we start at the origin. 
Every time we draw a match from the emptier box we draw an up step with weight $1/2$; every time we draw a match from the fuller box we draw a down step with weight $1/2$; if both boxes have the same size we draw an up step with weight $1$.
After $n$ up steps one of the boxes is empty, and the current altitude corresponds to the remaining matches in the other box.
Note that these paths are meanders which end with an up step.
Yet, their set of jumps are of two kinds: above the $x$-axis they are modeled by the step polynomial $P(u) = \frac{1}{2u} + \frac{u}{2}$ while on the $x$-axis by $P_0(u) = u$.
This is a certain instance of the reflection model studied by Banderier and the author in~\cite{bawa15b} (see also~\cite[Chapter~$4$]{Wallner16a}). 
In particular, it was shown, that in this case the final altitude of meanders follows a half-normal distribution; see~\cite[Table~$4$ of the arxiv version]{bawa15c}.
Chronologically, this was the starting point for the research of this paper.

For completeness, the generating function of Banach's matchbox problem is given by
\begin{align*}
	F(z,u) = \frac{zu}{ u(1-z) + (1-u)\sqrt{1-z}},
\end{align*}	
which is directly seen to satisfy all conditions of Theorem~\ref{theo:theo4}. 
Therefore, for $X_n$ being the number of remaining matches, we get
\begin{align*}
	\frac{X_n}{\sqrt{n}} \stackrel{d}{\to} \Hc\left(\sqrt{2}\right).
\end{align*}

Obviously, this problem can be generalized to arbitrary steps or different weights. The latter is known as Knuth's toilet paper problem~\cite{Knuth84Toilet}, which leads to a discrete distribution for negative drift, a half-normal distribution for zero drift (which is exactly the instance of Banach's matchbox problem), and a normal distribution for positive drift.

\section*{Acknowledgment}

I am greatly indebted to the referees for the detailed comments and  thorough work which significantly improved the structure and presentation of this work. 
Furthermore, I thank Cyril Banderier, Michael Drmota, Hsien-Kuei Hwang and Markus Kuba for interesting discussions on combinatorial schemes and limit laws. 
This research was partially supported by the Austrian Science
Fund (FWF) grants SFB~F50-03 and J~4162-N35.

\addcontentsline{toc}{section}{Bibliography} 
\bibliographystyle{plain}
\bibliography{literature_LP}

\end{document}